\long\def\symbolfootnote[#1]#2{\begingroup\def\thefootnote{\fnsymbol{footnote}}
\footnote[#1]{#2}\endgroup}
\newtheorem{lemma}{Lemma}[section]
\newtheorem{prop}[lemma]{Proposition}
\newtheorem*{T1}{Theorem 1}
\newtheorem*{T2}{Theorem 2}
\newtheorem*{C2}{Corollary 2}
\newtheorem*{C3}{Corollary 3}
\theoremstyle{remark}
\newtheorem{remark}[lemma]{Remark}
\begin{document}
\title{The Reductive Subgroups of $G_2$}
\author{David I. Stewart}\date{}
\maketitle
{\small {\bf Abstract.} Let $G:=G_2(K)$ be a simple algebraic group of type $G_2$ defined over an algebraically closed field $K$ of characteristic $p>0$. Let $\sigma$ denote a standard Frobenius automorphism of $G$ such that $G_\sigma\cong G_2(q)$ with $q\geq 4$. In this paper we find all reductive subgroups of $G$ and quasi-simple subgroups of $G_\sigma$ in the defining characteristic. Our results extend the complete reducibility results of \cite[Thm 1]{ls1}.}

\section{Introduction}
Recall that $G_2$ has maximal rank subgroups of type $A_1\tilde A_1$ and $A_2$ (also $\tilde A_2$ generated by all short root groups of $G$ when $p=3$). When $p=2$ we define $Z_1$ to be the subgroup of type $A_1$ obtained from the embedding 
\[A_1(K)\to A_1(K)\circ A_1(K)\leq G ;\hspace{20pt} x\mapsto (x,x).\]
Also when $p=2$, we define $Z_2$ to be the subgroup of type $A_1$ obtained from the embedding
\[A_1(K) \to A_2(K)\leq G\]
where $A_1\cong PSL_2(K)$ embeds in $A_2$ by its action on the three-dimensional space Sym$^2 V$ for $V$ the standard module for $SL_2(K)$. It is shown later that these subgroups are contained in the long root parabolic of $G$, that is, $P=\langle B, x_{-r}(t):t\in K\rangle$ where $r$ is the long simple root associated with the choice of Borel subgroup $B$.

Let $\bar L$, (resp. $\tilde L$) denote the standard Levi subgroup of the standard long root (resp. short root) parabolic subgroup of $G$ containing the Borel subgroup $B$. Let $\bar L_0$ (resp $\tilde L_0$) denote the subgroup of $\bar L$ (resp $\tilde L$) generated by the unipotent elements. Observe that $\bar L_0\cong \tilde L_0\cong A_1$.

The main theorem is:

\begin{T1}Let $X\cong A_1(K)$ be a subgroup of a parabolic subgroup in $G=G_2(K)$.

If $p>2$ then $X$ is conjugate to precisely one of $\bar L_0$ and $\tilde L_0$.

If $p=2$ then $X$ is conjugate to precisely one of $\bar L_0$, $\tilde L_0$, $Z_1$ and $Z_2$.\end{T1}

Recall Serre's notion of $G$-complete reducibility \cite{Serre}. A subgroup is said to be $G$-completely reducible or $G$-cr if, whenever it is a subgroup of a parabolic subgroup of $G$, it is contained in a Levi subgroup of that parabolic subgroup.

\begin{C2} All connected reductive subgroups of $G$ are $G$-cr unless $p=2$, in which case there are precisely two classes of non $G$-cr subgroups.\end{C2}

This extends the result \cite[Thm 1]{ls1} which states that all subgroups of $G$ are $G$-cr provided $p>3$. 

\begin{C3} Let $X$ denote a closed, connected semisimple subgroup of $G$. Then up to conjugacy, $(X,p,V_7\downarrow X)$ is precisely one entry in the following table where $V_7\downarrow X$ denotes the restriction of the seven-dimensional Weyl module $W_G(\lambda_1)$ to $X$.
\begin{center}
\begin{tabular}{|c|c|c|}
\hline
$X$ & $p$ & $V_7\downarrow X$\\
\hline
$A_2$ & any & $10\oplus 01\oplus 0$\\
$\tilde A_2$ & $p=3$ & $11$\\
$A_1 \tilde A_1$ & any & $1\otimes \tilde 1\oplus 0 \otimes \tilde W(2)$\\
$\bar L_0$ & any & $1\oplus 1\oplus 0^3$ \\
$\tilde L_0$ & any &  $1\oplus 1\oplus W(2)$ \\
$Z_1$ & $p=2$ & $T(2)\oplus W(2)$  \\
$Z_2$ & $p=2$ & $W(2)\oplus W(2)^* \oplus 0$ \\
$A_1\hookrightarrow A_1 \tilde A_1; x\mapsto (x^{(p^r)},x^{(p^s)})$ $r\neq s$ & any & $(1^{(p^r)}\otimes1^{(p^s)})\oplus W(2)^{(p^s)}$ \\
$A_1\hookrightarrow A_2$, irred & $p>2$ & $2\oplus 2\oplus 0$\\
$A_1$, max & $p\geq 7$ & $6$\\
\hline
\end{tabular}
\end{center}
\end{C3}

The subgroup denoted $\tilde A_2$ exists only when $p=3$ and is generated by the short root subgroups of $G$. (The above table appears not to contain the irreducible $A_1\leq \tilde A_2$. It is shown later that this  subgroup is conjugate to the subgroup $A_1\hookrightarrow A_1\tilde A_1$ where $r=1$, $s=0$.)

Some remarks on notation. In the above table and elsewhere we refer to an irreducible module by its high weight $\lambda$. When $X$ is of type $A_1$, $\lambda$ is given as an integer; by a module $ab$ for a group of type $A_2$ we mean the irreducible module with high weight $a\lambda_1+b\lambda_2$ where $\lambda_i$ is the fundamental dominant weight corresponding to the simple root $\alpha_i$. By $V^{(p^r)}$ we mean the Frobenius twist of the module $V$ induced by the Frobenius morphism $x\mapsto x^{(p^r)}$. The notation $\mu_1|\mu_2|\dots|\mu_n$ indicates a module with the same composition factors as the module $\mu_1\oplus\mu_2\oplus\dots\oplus\mu_n$. The notation $\mu_1/\mu_2/\dots/\mu_n$ indicates an indecomposable module with composition factors of high weights $\mu_i$ for some dominant weights $\mu_i$ and is given in the order in which the factors occur so that there is a submodule $\mu_{i}/\dots/\mu_n$ and a quotient $\mu_1/\dots/\mu_{i-1}$.  By $W(2)$ we denote the Weyl module for $A_1$ of high weight $2$; when $p>2$ this is irreducible and when $p=2$ it is indecomposable of type $1^{(2)}/0$. Lastly when $p=2$ we denote by $T(2)$ the four-dimensional tilting module for $A_1$ which is indecomposable of type $0/1^{(2)}/0$. 

Now let $\sigma$ denote a standard Frobenius automorphism of $G$ such that $G_\sigma=G_2(q)$ with $q\geq 4$. We use the proof of Theorem 1 and its corollaries to prove a result about the quasi-simple subgroups of Lie type of $G_\sigma$ in the defining characteristic. (A quasi-simple group of Lie type is a perfect central extension of a simple group of Lie type.)

\begin{T2} Let $X(q_0)\leq G_\sigma$ where $X(q_0)$ is a quasi-simple group of Lie type over $\mathbb F_{q_0}$, a field of the same characteristic as $\mathbb F_q$. Then there exists a $\sigma$-stable simple algebraic subgroup $\bar X$ of $G$ of the same type as $X(q_0)$ containing $X(q_0)$.\end{T2}

\begin{remark} Using \cite[5.1]{ls4}, it follows that $X(q_0)$ is unique up to conjugacy in $\bar X_\sigma$. Since Corollary 3 determines $\bar X$, it follows that we have found, up to $G_\sigma$-conjugacy, all quasi-simple subgroups of Lie type of $G_\sigma$ with the same defining characteristic as $G$. \end{remark}

\begin{remark} The only non-simple semisimple subgroups of $G_\sigma$ are of the form $SL_2(q_1)\circ SL_2(q_2)$ with $q_1,q_2\geq 4$, since any such group must have rank $2$. Since we have found all the quasi-simple groups using the above theorem, we have also found all semisimple subgroups of Lie type of $G_\sigma$ in the defining characteristic. (A semisimple subgroup of Lie type, $H$ is a subgroup such that $H'=H$ and $H/Z(H)$ is a direct product of simple subgroups of Lie type.) 
\end{remark}

\section{Preliminaries}

Let $X\cong A_1(K)$ with $|K|\geq 4$ finite or $K$ algebraically closed of characteristic $p>0$. Let $V:=V_X(\lambda)$ denote an irreducible rational $KX$-module of high weight $\lambda$. To prove Theorem 1 we require some information about $H^1(X,V)$, the first cohomology group of $X$ with coefficients in $V$. We recall that $H^1(X,V)$ is a $K$-vector space and is in bijection with the $V$-conjugacy classes of closed complements to $V$ in the semidirect product $XV$. Recall also the standard fact that  $H^1(X,V)\cong Ext^1_X(K,V)$ (see \cite[p50]{jantz}).

\begin{lemma} $Ext^1_X(K,V_X(\lambda))$ is non-zero if and only if $\lambda$ is a Frobenius twist of the module $(p-2)\otimes 1^{(p)}$. When it is non-zero it is one-dimensional unless $|K|=9$ and $V_X(\lambda)=1\otimes 1^{(3)}$ where it is two-dimensional.
\end{lemma}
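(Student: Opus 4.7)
The plan is to reduce the problem to classical $\mathrm{Ext}^1$ computations for $A_1$, handling the algebraic-group and finite-group cases in parallel.

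I would first apply Steinberg's tensor product theorem to write $V_X(\lambda) \cong L(\mu_0) \otimes L(\mu_1)^{(p)} \otimes \cdots \otimes L(\mu_n)^{(p^n)}$ with $0 \leq \mu_i \leq p - 1$. For the algebraic group $X = A_1(K)$, twisting by Frobenius preserves $\mathrm{Ext}^1_X(K, -)$ (since Frobenius is a field automorphism when $K$ is algebraically closed), which reduces the question to small weights. The computation of $H^1(A_1, L(\lambda))$ for the algebraic group is then classical: by \cite[II.4]{jantz}, this group is at most one-dimensional and is non-zero precisely when $\lambda = p^r(2p - 2) = p^r\bigl((p-2) + p\bigr)$ for some $r \geq 0$, which settles the statement when $K$ is algebraically closed.

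For the finite case, I would compare $\mathrm{Ext}^1_{X(q)}(K, V)$ with $\mathrm{Ext}^1_X(K, V)$ using the generic cohomology theorem of Cline, Parshall, Scott and van der Kallen, which identifies the two for $q$ sufficiently large compared to $\lambda$. Since a non-trivial $\mathrm{Ext}^1$ already forces $\lambda$ to be a twist of $2p - 2$, only finitely many small cases can escape this comparison, and these are controlled by the explicit cohomology tables for $SL_2(q)$ in the literature (originating with Sin and with Cline--Scott); these tables confirm both the non-vanishing criterion and the one-dimensionality in all remaining restricted cases.

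The main obstacle is the exceptional case $|K| = 9$, $V = 1 \otimes 1^{(3)} = L(4)$. Here $L(4) \cong L(1) \otimes L(1)^{(3)}$ is self-conjugate under the non-trivial Frobenius field automorphism of $\mathbb{F}_9$, and this symmetry produces a second independent $\mathrm{Ext}^1$ class beyond the one inherited from the algebraic group, giving the stated dimension $2$. I would verify this either by directly exhibiting two inequivalent $V$-conjugacy classes of complements to $V$ in $VX$ (one coming from restriction from the algebraic group, the other detected by the field twist), or by quoting the relevant entry of the published $SL_2(q)$ cohomology tables.
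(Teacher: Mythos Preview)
Your proposal would ultimately work, but it is considerably more elaborate than the paper's argument, and the generic cohomology step does not function as cleanly as you describe. The paper's entire proof is a one-line citation: set $\mu=0$ in \cite[4.5]{ajl}, which already computes $\mathrm{Ext}^1_{SL_2(p^n)}(L(\mu),L(\lambda))$ for all simple modules, together with the correction recorded in \cite[1.2]{ls3} (this is precisely where the anomalous $|K|=9$ case is flagged). No separation into algebraic and finite cases is needed, and no generic cohomology is invoked.

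On your approach: the sentence ``only finitely many small cases can escape this comparison'' is not right as stated. The CPSvdK bound for $H^1(G(q),L(\lambda))\cong H^1_{\mathrm{gen}}(G,L(\lambda))$ depends on $\lambda$, and for fixed $q=p^n$ the relevant $\lambda$ range over $0\le\lambda\le q-1$; the number of pairs $(q,\lambda)$ outside the stable range is not finite as $q$ varies. You implicitly recognise this, since you then fall back on ``explicit cohomology tables for $SL_2(q)$'' to handle the residual cases --- but those tables are exactly \cite{ajl}, which already gives the full answer and renders the generic cohomology detour (and indeed the separate treatment of the algebraic group) superfluous. Likewise, your heuristic for the $|K|=9$ exception (self-conjugacy under the field automorphism producing an extra class) is suggestive but not a proof; the actual verification again comes down to the AJL tables and the correction in \cite{ls3}. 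So the honest content of your argument, once the scaffolding is removed, is the same citation the paper gives.
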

\begin{proof} This follows from setting $\mu=0$ in \cite[4.5]{ajl} with the small correction given in \cite[1.2]{ls3}.\end{proof}

Recall that a parabolic subgroup $P$ has a decomposition as a semidirect product $LQ$ of a Levi subgroup $L$ with unipotent radical $Q$. We employ the above result to investigate complements to $Q$ in $P$. The next result shows how $Q$ admits a filtration by $KL$-modules. We recall the notions of height, shape and level of a root from \cite{abs}. Take a root system $\Phi$ for $G(K)$ with fixed base of simple roots $\Pi$. Let $J\subset \Pi$ be a subset of the simple roots and define the parabolic subgroup $P_J$ by $P_J=\langle B, x_{-\alpha}(t): \alpha\in J\rangle$. Let $\Phi_J=\mathbb{Z}J\cap \Phi$. Fix a root $\beta\in\Phi^+ - \Phi_J$. We write $\beta=\beta_J+\beta_J'$ where $\beta_J=\sum_{\alpha_i\in J} c_i\alpha_i$ and $\beta_J'=\sum_{\alpha_i\in\Pi-J} d_i\alpha_i$.  Define
\begin{align*} \text{height}(\beta)&=\sum c_i+\sum d_i\\
\text{shape}(\beta)&=\beta_J'\\
\text{level}(\beta)&=\sum d_i.\end{align*}
Now define $Q(i):=\langle x_\beta(t): t\in K,\text{level}(\beta)\geq i\rangle$ and define $V_S=\langle x_\beta(t):t\in K, \text{shape}(\beta)=S\rangle$.

\begin{lemma}\label{abslem} Let $G(K)$ be a split Chevalley group. For each $i\geq1$, $Q(i)/Q(i+1)$ has the structure of a $KL$-module with decomposition $Q(i)/Q(i+1)=\prod V_S$, the product over all shapes $S$ of level $i$. Furthermore, each $V_S$ is a $KL$-module with highest weight $\beta$ where $\beta$ is the unique root of maximal height and shape $S$.\end{lemma}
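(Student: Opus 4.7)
The plan is a direct, hands-on application of the Chevalley commutator formula, treating $Q(i)/Q(i+1)$ as an abelian group and tracking how level and shape interact. I would first verify that each $Q(i)$ is normal in $P_J$. The parabolic $P_J$ is generated by $T$, the positive root subgroups and the negative simple root subgroups $x_{-\alpha}(s)$ for $\alpha\in J$, all of which (apart from $T$) are root elements of level $0$. For such a generator $x_\gamma(s)$ and any $x_\beta(t)$ with $\text{level}(\beta)\geq i$, the Chevalley formula expresses $[x_\gamma(s),x_\beta(t)]$ as a product of terms $x_{j\gamma+k\beta}(\cdot)$ with $j,k\geq 1$, whose roots have level $k\cdot\text{level}(\beta)\geq i$; torus conjugation preserves root groups.

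Next I would show that $Q(i)/Q(i+1)$ is abelian: for $\beta,\beta'$ of level $\geq i\geq 1$, any commutator $[x_\beta(t),x_{\beta'}(u)]$ lies in $Q(2i)\subseteq Q(i+1)$. Since $x_\beta\cong(K,+)$ this identifies $Q(i)/Q(i+1)$ as an additive group with $\bigoplus_{\text{level}(\beta)=i} K$, and the conjugation action of $L$ (which normalises each $Q(i)$ by the previous step) makes it a $KL$-module. To obtain the shape decomposition, I would take $\gamma\in\Phi_J$ (so $\gamma_J'=0$) and $\beta$ of level $i$, and consider the commutator $[x_\gamma(s),x_\beta(t)]$: it produces terms $x_{j\gamma+k\beta}(\cdot)$ of shape $k\cdot\text{shape}(\beta)$ and level $k\cdot\text{level}(\beta)$. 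For $k=1$ both shape and level are preserved and the term contributes to $V_S$ with $S=\text{shape}(\beta)$; for $k\geq 2$ the level jumps to $\geq 2i$ and the contribution vanishes modulo $Q(i+1)$. Hence each $V_S$ is an $L$-submodule of $Q(i)/Q(i+1)$, and summing over shapes of level $i$ gives the claimed decomposition.

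To identify the highest weight of $V_S$, I would note that $x_\beta$ is a $T$-weight vector of weight $\beta$, so the $T$-weights occurring in $V_S$ are exactly the roots of shape $S$. The $k=1$ analysis above shows that for $\gamma\in\Phi_J^+$, conjugation by $x_\gamma(s)$ sends $x_\beta(t)$ modulo $Q(i+1)$ to a sum of terms supported on $\gamma+\beta$ (when this is a root), which has the same shape as $\beta$ but strictly greater height. Thus the positive root subgroups of $L$ strictly raise height within the shape, and a root of maximal height in $S$ gives a highest-weight vector for $L$. It is a standard combinatorial fact about root systems that within a given shape there is a unique such root, so its weight $\beta$ is the highest weight of $V_S$.

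The main obstacle is not any single step but the bookkeeping around the Chevalley commutator formula: one must check carefully that $\text{level}(j\gamma+k\beta)=k\cdot\text{level}(\beta)$ whenever $\text{level}(\gamma)=0$, and that all cross terms with $k\geq 2$ land in $Q(i+1)$, which is exactly what is needed for the shape decomposition to be genuinely $L$-stable. The uniqueness of a maximal-height root within a fixed shape is the other delicate point; it is most cleanly extracted from the general theory of parabolic subsystems, where roots of a given shape form a single orbit-like string under addition by $\Phi_J$.
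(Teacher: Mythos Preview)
The paper does not prove this lemma at all: it simply cites \cite{abs} (Azad--Barry--Seitz), noting their Remark~1 for the special case. Your proposal is therefore not competing with an argument in the paper but rather unpacking the content of that reference, and your outline is essentially the standard proof found there.

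One small inaccuracy: you write that the generators of $P_J$ ``apart from $T$ are root elements of level $0$'', but the positive root subgroups you list include those in $Q$, which have strictly positive level. This does not damage the normality argument, since for $\gamma$ of level $m\geq 0$ and $\beta$ of level $\geq i$ the roots $j\gamma+k\beta$ occurring in the commutator have level $jm+k\,\mathrm{level}(\beta)\geq i$; you just need to phrase the case split as ``generators of $L$ have level $0$, generators in $Q$ have positive level'' rather than claiming all generators have level $0$. With that adjustment your sketch is sound, and the two points you flag as delicate---the bookkeeping that $k\geq 2$ terms land in $Q(i+1)$, and the uniqueness of a maximal-height root within a fixed shape---are exactly the content of the argument in \cite{abs}.
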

\begin{proof} This is the main result of \cite{abs}, noting the Remark 1 at the end of the paper which gives the result even in the case $G(K)$ is special.\end{proof}

Throughout the paper we will need the restrictions $V_7\downarrow X$ of the seven-dimensional Weyl module $V_7:=W_{G_2}(\lambda_1)$ to various subgroups $X$ of $G=G_2(K)$. We calculate these now.

\begin{lemma} \label{table}The entries in the table following Corollary 3 have the restrictions $V_7\downarrow X$ as stated.\end{lemma}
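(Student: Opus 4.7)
My strategy is to work top-down through the table, first establishing the three maximal-rank cases ($A_2$, $\tilde A_2$ in $p=3$, and $A_1\tilde A_1$), and then obtaining the remaining entries by restricting along subgroup inclusions. The main tool is the weight structure of $V_7 = W_{G_2}(\lambda_1)$, whose seven weights are the six short roots of $G_2$ together with $0$, each with multiplicity one; choosing a suitable maximal torus in each subgroup and evaluating the appropriate coroots on these weights pins down the composition factors unambiguously.

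For $X = A_2$ (respectively $X = \tilde A_2$), a maximal torus of $X$ is a maximal torus of $G$, and a direct weight computation shows that the six short roots together with $0$ assemble into $V(10)\oplus V(01)\oplus V(0)$ (respectively, into the irreducible quotient of the adjoint Weyl module $W(11)$, which in characteristic~$3$ has dimension~$7$). For $X = A_1\tilde A_1$ realised by a long root $A_1$ together with an orthogonal short root $\tilde A_1$, evaluating the two coroots on the weights of $V_7$ produces the bi-weights of $1\otimes\tilde 1\oplus 0\otimes\tilde W(2)$. Direct summation is immediate in odd characteristic; for $p = 2$ one pins down the trivial submodule of $W_G(\lambda_1)$ explicitly (e.g.\ via the traceless octonion realisation of $V_7$) to check that the restriction is still a direct sum.

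Restricting this $A_1\tilde A_1$-decomposition to the individual Levi factors produces the $\bar L_0$ and $\tilde L_0$ rows, and pulling back along the twisted diagonal $x\mapsto(x^{(p^r)},x^{(p^s)})$ gives the row for the $(r,s)$-twisted $A_1$; specialising $r = s = 0$ in characteristic~$2$ gives the $Z_1$ row, using the identity $V(1)\otimes V(1)\cong T(2)$ in that characteristic. From the $A_2$-formula we obtain the row for the irreducible $A_1\hookrightarrow A_2$ in characteristic $>2$ via $V_{A_2}(10)\downarrow A_1 = V(2) = V_{A_2}(01)\downarrow A_1$, and the $Z_2$ row in characteristic~$2$ via $V_{A_2}(10)\downarrow Z_2 = W(2)$ together with $V_{A_2}(01)\downarrow Z_2 = W(2)^*$. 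The maximal $A_1$ ($p\geq 7$) is handled by the principal $SL_2$ formula: the weight $\lambda$ of $G$ restricts to $\langle\lambda,2\rho^\vee\rangle$, which applied to the seven weights of $V_7$ yields the multiset $\{0,\pm 2,\pm 4,\pm 6\}$ and hence $V(6)$.

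The principal technical obstacle lies in the small-characteristic rows. When $p = 2$ one must correctly identify the non-self-dual indecomposables $W(2)$ and $W(2)^*$, distinguish them from the tilting module $T(2)$, and verify that $V_7\downarrow A_1\tilde A_1$ genuinely splits as a direct sum rather than remaining a non-trivial extension (which requires using the explicit structure of $V_7$, not just a weight count). When $p = 3$ the subtlety is that the Weyl module for $\tilde A_2$ of high weight $(1,1)$ contains a trivial submodule, so the $7$-dimensional restriction is the irreducible quotient $V(11)$ rather than $W(11)$. Once these points are settled, the remaining rows follow by the routine restrictions sketched above.
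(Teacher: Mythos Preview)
Your proof is correct and complete, but it takes a genuinely different route from the paper's. The paper does not work intrinsically with the weight lattice of $G_2$; instead it embeds $G_2$ in $D_4$ as the triality fixed points and computes everything via the natural $8$-dimensional $D_4$-module $V_8$, using $V_8\downarrow G_2 = 0/V_7$. Concretely, the paper realises $A_1\tilde A_1 \leq A_1^4 \leq D_4$ as $SO_4\perp SO_4$ (long $A_1$ in the first factor, short $\tilde A_1$ diagonal in the other three), which makes the direct-sum decomposition automatic even in characteristic $2$ and avoids your octonion check; it obtains the $A_2$ row through $A_2\leq A_3\leq D_4$; and for $\tilde A_2$ it expresses the $G_2$ simple roots in $D_4$ coordinates and evaluates $\lambda_1$. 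The maximal $A_1$ row the paper simply cites from Testerman \cite{Test}, whereas you give a self-contained principal $SL_2$ computation. Your approach is more intrinsic and arguably cleaner for $A_2$, $\tilde A_2$, and the maximal $A_1$, while the paper's $D_4$ approach buys you the characteristic-$2$ splittings for free and sidesteps the extension checks you flag as the main technical obstacle.
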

\begin{proof} The restriction $V_7\downarrow X$ for the maximal $A_1$ when $p\geq 7$ is well known and is listed in \cite[Main Theorem]{Test}. 

Consider $G_2$ embedded in $D_4$ as the fixed points of the triality automorphism. We consider the restriction of the natural 8-dimensional module $V_8$ for $D_4$. Recall that $V_8\downarrow G_2=0/V_7$. For $p=2$, $V_7$ becomes reducible and $V_8\downarrow G_2=0/V_6/0$.

Recall that $\bar L_0, \tilde L_0$ are the simple, connected subgroups of the long and short Levi subgroups respectively. We first consider $V_7\downarrow \bar L_0, \tilde L_0$ and $A_1\tilde A_1$. 

We can see that $A_1\tilde{A_1}\leq A_1^4\leq D_4$. It is clear that the $A_1^4$ subsystem in $D_4$ is realised as $A_1\otimes A_1\perp A_1\otimes A_1\cong SO_4\perp SO_4$. Take the long $A_1$ to be the first of the four and the short $\tilde A_1$ to be embedded diagonally in the other three. 

Now it follows that we have $V_8\downarrow \tilde L_0=0^2\otimes 1\perp 1\otimes 1=1\oplus1\perp T(2)$ for $p=2$ and $1\oplus1\perp 2\oplus 0$ for $p>2$.  This gives $V_7\downarrow \tilde L_0=1\oplus 1\perp W(2)$ for $p=2$ and $V_7\downarrow \tilde L_0=1\oplus 1\perp 2$ for $p>2$. 

We also have $V_8\downarrow \bar L_0=1\otimes 0^2 \perp 0^2\otimes 0^2=1\oplus1\perp 0^4$. Hence $V_7\downarrow L_0=1\oplus 1\perp 0^3$. It follows also that $V_7\downarrow A_1\tilde A_1=1\otimes \tilde 1 \oplus 0 \otimes \tilde W(2)$.

Next we establish $V_7\downarrow A_2$. As the $A_2$ is a subsystem subgroup of $G_2$, it is in a subsystem of the $D_4$. It is therefore contained in an $A_3$. We can see easily that $\lambda_1$ for $D_4$ restricts to $A_3$ as $\lambda_1\oplus\lambda_3=\lambda_1\oplus\lambda_1^*$ (see e.g. \cite[13.3.4]{ca}). Since $A_2$ sits inside $A_3$ such that the natural module for $A_3$ restricts to $A_2$ as $\lambda_1\oplus 0$ we see that $V_7\downarrow A_2=\lambda_1 \oplus \lambda_1^*\oplus 0$. 

Using this we can restrict to the irreducible $A_1\leq A_2$ for $p>2$, and to $Z_2\leq A_2$, when $p=2$. In this case the natural module for $A_2$, $\lambda_1\downarrow A_1=2$ for $p>2$ and $\lambda_1\downarrow Z_2=W(2)$. Hence $V_7\downarrow A_1=2\oplus 2\oplus 0$ and $V_7\downarrow Z_2=W(2)\oplus W(2)^*\oplus 0$.

Now we compute $V_7\downarrow X$ for $X:=A_1\hookrightarrow A_1\tilde A_1$ twisted by $p^r$ on the first factor and $p^s$ on the second. Using the decomposition above, we read off $V_7\downarrow X=1^{(p^r)}\otimes 1^{(p^s)} \oplus 2^{(p^s)}$. For $s=r=0$ when $p=2$, this gives $V_7\downarrow Z_1=T(2)\oplus 2/0$. 

Lastly let $X=\tilde A_2$ ($p=3$). One checks that a base of simple roots $\{\beta_1,\beta_2\}$ for $G$ is expressed in terms of the roots of $D_4$ as $\{\frac{1}{3}(\alpha_1+\alpha_3+\alpha_4),\alpha_2\}$. On these two elements, the weight $\lambda_1$ for $D_4$ has $\lambda_1(\beta_1)=1$ and $\lambda_1(\beta_2)=1$ implying $V_8\downarrow \tilde A_2$ has composition factors $11|00$ so that $V_7\downarrow \tilde A_2=11$.
\end{proof}

\section{Complements in parabolics: proof of Theorem 1}

Let $G=G_2(K)$ with $K$ algebraically closed of characteristic $p$ and let $X\cong A_1(K)$ be a subgroup of $G$ contained in a parabolic subgroup $P=LQ$ of $G$. Then $X$ is a complement to $Q$ in $L_0 Q$, where $L_0$ denotes the simple subgroup of $L$ generated by the unipotent elements. In the cases we are considering $L_0=L'$. Recall the notation $\bar L_0$ and $\tilde L_0$ denoting the cases where $L_0$ is a long root $A_1$ and short root $A_1$ respectively.

\begin{lemma}\label{inlong} If $X$ is not conjugate to $L_0$, then $p=2$ and $X$ is contained in the long root parabolic subgroup of $G$.\end{lemma}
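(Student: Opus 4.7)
My approach is to apply Lemma~\ref{abslem} to filter $Q$ by levels, identify the resulting $L_0$-modules $V_S$, and invoke Lemma 2.1 to show that the first cohomology groups $H^1(L_0,V_S)$ vanish in the cases where the lemma asserts $X\sim L_0$. Together with the standard iterated lifting argument---successively replacing $X$ by a $Q(i)$-conjugate that agrees with $L_0$ modulo $Q(i+1)$---such vanishing forces $X$ to be $Q$-conjugate, hence $G$-conjugate, to $L_0$. This works because $[Q,Q(i)]\subseteq Q(i+1)$ in the $G_2$ parabolics, so the filtration has central quotients and each lifting step reduces to an abelian cohomology computation on the associated graded piece.

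For the short root parabolic $\tilde P$, level~1 of the ABS filtration consists of the single shape whose roots are $\alpha_2, \alpha_1+\alpha_2, 2\alpha_1+\alpha_2, 3\alpha_1+\alpha_2$; the associated $V_S$ has $\tilde L_0$-highest weight $\langle 3\alpha_1+\alpha_2,\alpha_1^\vee\rangle=3$ and dimension $4$, so it is the Weyl module $W(3)$. Level~2 consists of $3\alpha_1+2\alpha_2$ alone, which is $\tilde L_0$-trivial. By Lemma 2.1, $H^1(\tilde L_0,L(\lambda))$ vanishes for each $\lambda\in\{0,1,3\}$ in every characteristic (none of these weights is a Frobenius twist of $2p-2$); a long exact sequence in the characteristics $p=2,3$ where $W(3)$ is reducible then gives $H^1(\tilde L_0,W(3))=0$ as well. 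The lifting argument therefore forces $X\sim\tilde L_0$ in every characteristic.

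For the long root parabolic $\bar P$, the three levels of the filtration yield $\bar L_0$-modules of highest weights $1$, $0$, $1$, computed from $\langle\alpha_1+\alpha_2,\alpha_2^\vee\rangle=\langle 3\alpha_1+2\alpha_2,\alpha_2^\vee\rangle=1$; so the composition factors are two copies of the natural module $L(1)$ together with one trivial. For $p$ odd, $2p-2\geq 4>1$, so Lemma 2.1 gives $H^1(\bar L_0,L(1))=0$, and the lifting produces $X\sim\bar L_0$. Combining with the previous paragraph, the only case in which $X$ can fail to be conjugate to $L_0$ is $p=2$ with $P=\bar P$---precisely the conclusion of the lemma. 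The main care required in executing the plan is a careful bookkeeping for the iterated lifting principle, but this is a routine consequence of the vanishing $H^1$'s together with the central-filtration structure of $Q$; I would take the details from \cite{ls1,ls3} or argue directly by induction on the levels.
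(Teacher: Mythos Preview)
Your outline has a genuine gap: you implicitly assume that a closed $A_1$-complement $X$ to $Q$ in $L_0Q$ arises from a \emph{section} of the projection $\pi:L_0Q\to L_0$, i.e.\ that $\pi|_X:X\to L_0$ is an isomorphism of algebraic groups. In positive characteristic this need not hold: $\pi|_X$ is a bijective morphism, hence an isogeny, but it can be a nontrivial Frobenius $F_q$. When that happens the cocycle describing $X$ is a cocycle for the $F_q$-twisted action, so the obstruction groups you must kill are $H^1\bigl(A_1,V_S^{[q]}\bigr)$ for all $q=p^s$, not just $H^1(L_0,V_S)$. The paper makes exactly this point: it fixes an isomorphism $\phi:L_0\to X$, observes that $\pi\circ\phi$ is a standard Frobenius $x\mapsto x^{(q)}$, and then compares the \emph{twisted} weights with Lemma~2.1.

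That this is not a cosmetic omission is visible in your own bookkeeping. You write ``for $p$ odd, $2p-2\ge 4>1$, so $H^1(\bar L_0,L(1))=0$''; but for $p=2$ one also has $1\ne 2p-2=2$, so by the very same reasoning $H^1(\bar L_0,L(1))=0$ in characteristic $2$ as well. Taken at face value, your lifting argument would then force $X\sim\bar L_0$ even when $p=2$, contradicting the existence of $Z_1$ and $Z_2$. The resolution is precisely the twist: for $p=2$ and $q\ge 2$ the level-1 and level-3 pieces become $L(1)^{[q]}=L(q)$, which \emph{is} a Frobenius twist of $L(2)=L(2p-2)$, so $H^1\ne 0$ and the lifting fails. (Indeed the explicit complements $X_{k,l}$ constructed later project to $t\mapsto x_2(t^2)$, i.e.\ $\pi|_X=F_2$, and cannot be written as rational sections of $\pi$.) Once you insert the twist and run the comparison with Lemma~2.1 over all $q$, the short-root parabolic still gives vanishing in every characteristic (since no power of $p$ times $3$, nor times $1$, equals $(2p-2)p^r$), while the long-root parabolic survives only for $p=2$, $q>1$---recovering the paper's conclusion.
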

\begin{proof}
Using 2.2, for the short root parabolic one calculates that there are two levels in $Q$ and they have the structure of $KL_0$ modules with high weights $0$ and $3$ respectively. For $p>3$ they are restricted and thus irreducible. For $p=3$ they are the modules $0$ and $1^{(3)}/1$; for $p=2$ they are $0$ and $1^{(2)}\otimes 1$.

For the long parabolic one calculates that there are three levels with high weights $1$, $0$, and $1$ respectively. These are restricted and irreducible for all characteristics.

As $\bar L_0$ (resp. $\tilde L_0$) has some odd weights on the modules in $Q$, it is simply connected and hence admits a morphism $\phi$ to $X$. Composing this with the projection $\pi$ to the Levi factor, we have the morphism $\pi\circ\phi:L_0\to L_0$. It follows that $\pi\circ\phi$ is an isogeny. We may assume that this is the standard Frobenius morphism corresponding to $x\mapsto x^{(q)}$, say. This has the effect of twisting the modules found for $\bar L_0$ or $\tilde L_0$ above. Comparing these weights with 2.1, we see that none of the modules admitting a non-trivial $H^1$ is present unless $p=2$, $q$ is non-trivial, and $X$ is in the long parabolic, a complement to $Q$ in $\bar L_0 Q$.\end{proof}

From this point we assume that $p=2$, $X\leq P$ the long root parabolic, a complement to $Q$ in $\bar L_0 Q$ and $X$ is not conjugate to $\bar L_0$ . As $H^1(X,1^{(q)})$ is $1$-dimensional for all $q>1$ we may assume that $q=2$, observing that we can obtain any other complement  to $Q$ by applying a Frobenius map to an appropriate complement we get for $q=2$. 

Some notation is necessary for the next part of the paper. Recall the notation from \cite{ca} which uses $x_r(t)$ to refer to the root element with parameter $t$ corresponding to the root $r$. Since we are working entirely within $G$, we will use $x_i(t)$ for $i\in\{\pm 1,\dots\,\pm 6\}$. If we write $(a,b)$ for $a\alpha_1+b\alpha_2$ with $\alpha_1$ the short fundamental root and $\alpha_2$ the long fundamental root of $G$, then \[ [x_1,x_2,x_3,x_4,x_5,x_6]=[x_{(1,0)},x_{(0,1)},x_{(1,1)},x_{(2,1)},x_{(3,1)},x_{(3,2)}]\]  Under this notation and that of Lemma \ref{abslem} \begin{align*}Q=Q(1)&=\langle x_i(t) : i\in\{1,3,4,5,6\}\rangle\\ Q(2)&=\langle x_i(t) : i\in\{4,5,6\}\rangle\\ Q(3)&=\langle x_i(t) : i\in\{5,6\}\rangle.\end{align*} We see then that $Q/Q(2)$, $Q(2)/Q(3)$ and $Q(3)$ are modules for $X$ of high weights $2$, $0$ and $2$, respectively. 

\begin{lemma} Let $k,l\in K$. The groups $X_{k,l}$ generated by \begin{align*}x_+(t)&=x_2(t^2)x_3(kt)x_6(k^3t+lt){\textrm \ \ and }\\x_-(t)&=x_{-2}(t^2)x_1(kt)x_5(lt)\end{align*} for all $t\in K$ are closed complements to $Q$ in $\bar L_0 Q$. \end{lemma}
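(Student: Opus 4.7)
The strategy is to show that $X_{k,l}$ is the image of a morphism of algebraic groups $\psi: A_1(K) \to G$ sending the Chevalley generators $y_\pm(t)$ of $A_1$ to $x_\pm(t)$. If such a $\psi$ exists, then $X_{k,l} = \psi(A_1)$ is automatically closed in $G$, and the complement property follows at once: the composition $\pi \circ \psi : A_1 \to \bar L_0$, where $\pi : \bar L_0 Q \to \bar L_0$ is the projection, sends $y_\pm(t) \mapsto x_{\pm 2}(t^2)$, which is the standard Frobenius of $A_1$ and hence a bijection on $K$-points since $K$ is algebraically closed. Thus $\pi|_{X_{k,l}}$ is an isomorphism of abstract groups, giving $X_{k,l} \cap Q = 1$ and $X_{k,l} \cdot Q = \bar L_0 Q$.

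It remains to verify the Steinberg presentation of $A_1$ for the proposed $\psi$. First, each of $t \mapsto x_\pm(t)$ must be a $1$-parameter subgroup of $P$. A direct application of the Chevalley commutator formula shows that the three factors defining $x_+(t)$ pairwise commute, since no $\mathbb Z_{\geq 1}$-combination of two distinct elements of $\{\alpha_2,\, \alpha_1+\alpha_2,\, 3\alpha_1+2\alpha_2\}$ is a root of $G_2$; the analogous statement holds for the factors of $x_-(t)$ indexed by $\{-\alpha_2,\, \alpha_1,\, 3\alpha_1+\alpha_2\}$. Together with the identity $(s+t)^2 = s^2 + t^2$ in characteristic $2$, this yields $x_\pm(s)\,x_\pm(t) = x_\pm(s+t)$, so $x_\pm(K) \cong \mathbb G_a$ is a closed $1$-parameter subgroup of $P$.

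The second and harder relation is the Weyl-element conjugation $w\, x_+(t)\, w^{-1} = x_-(t)$, where $w := x_+(1)\,x_-(1)\,x_+(1)$ (no signs needed, as $p = 2$). This is verified by a direct expansion via the Chevalley commutator constants for $G_2$; the correction terms $x_3(kt),\, x_6((k^3+l)t),\, x_1(kt),\, x_5(lt)$ are chosen precisely so that the $Q$-valued defects arising when the level-$0$ terms $x_{\pm 2}(t^2)$ are commuted past the $Q$-parts cancel out. The appearance of $k^3$ reflects the structure constant for the iterated commutator reaching the level-$3$ long root $3\alpha_1 + 2\alpha_2$. This step is the main obstacle: it requires careful book-keeping with the $G_2$ structure constants at $p = 2$. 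A cleaner alternative would be to verify $\psi$ matrix-theoretically on the seven-dimensional module $V_7$, using the predicted decomposition $V_7 \downarrow X_{k,l}$ from Lemma 2.3 as a consistency check.
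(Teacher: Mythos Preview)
Your approach is essentially the paper's: verify a Steinberg-type presentation for $x_\pm(t)$ via the Chevalley commutator relations in $G_2$, conclude $X_{k,l}\cong A_1(K)$, and read off the complement property. Your framing through a morphism $\psi:A_1\to G$ with $\pi\circ\psi$ equal to Frobenius is a tidy way to get $X_{k,l}\cap Q=1$ and $X_{k,l}Q=\bar L_0 Q$ simultaneously; the paper argues the latter directly from generators and the former from ``$X_{k,l}\cong A_1(K)$, hence no normal unipotent subgroup''. Your verification of the additivity relation is identical to the paper's.

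There is one point to tighten. You list only two relations---additivity, and the single Weyl conjugation $w\,x_+(t)\,w^{-1}=x_-(t)$ with $w=n_+(1)$---and describe the latter as ``the second and harder relation''. These two alone do not present $SL_2(K)$: the torus relation $h_+(s)h_+(t)=h_+(st)$ and the general form $n_+(s)\,x_+(t)\,n_+(s)^{-1}=x_-(s^{-2}t)$ are also required (these are precisely the relations (ii) and (iii) that the paper takes from Carter). Without them you cannot conclude that the map on generators extends to a homomorphism out of $SL_2(K)$, and hence cannot invoke the Frobenius-bijectivity argument. The paper handles this by first computing the closed form
\[
n_+(t)=n_2(t^2)\,x_4(k^2),
\]
a single commutator calculation from which (ii) and (iii) drop out almost for free, since $x_4$ commutes with $x_{\pm 2}$. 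This is exactly the ``careful book-keeping'' you flag as the main obstacle; once you carry it out (or, as you suggest, verify everything on the matrix representation $V_7$), the proof is complete along the lines you describe.
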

\begin{proof}

We certainly have $X_{k,l}Q=\bar L_0 Q$ as $\bar L_0 Q$ is generated by $\{x_i(t)\}$ for $i\in\{1,2,3,4,5,6,-2\}$. It remains to show that $X_{k,l}$ is isomorphic to $A_1(K)$, and it follows that $X_{k,l}\cap Q=\{1\}$ as required.

To show this we will check the generators and relations given in \cite[12.1.1 \& Rk. p198]{ca}, leaving us to show the following three statements hold:
\begin{enumerate}
\item $x_{\pm}(t_1)x_\pm(t_2)=x_\pm(t_1+t_2)$,
\item $h_+(t)h_+(u)=h_+(tu)$ and
\item $n_+(t)x_+(t_1)n_+(t)^{-1}=x_{-}(-t^{-2}t_1)$,
\end{enumerate}
for all $t_1,t_2\in K$ and $t,u\in K^\times$ where $n_+(t)=x_+(t)x_{-}(-t^{-1})x_+(t)$ and $h_+(t)=n_+(t)n_+(-1)$. We will abbreviate $n_{\alpha_i}(t)$ to $n_i(t)$, similarly for $h_{\alpha_i}(t)$.

Using the commutator relations for $G_2$ given in \cite[5.2.2]{ca} we show that these relations hold.

Write  \[\left[\begin{array}{c}i \\t\end{array}\right]:=x_i(t).\] 

Firstly, item (i) is easily checked: no positive linear combination of roots $\alpha_2$, $\alpha_3$ and $\alpha_6$ is a root except for the roots themselves, so 
\[\left[\begin{array}{c}2 \\t^2\end{array}\right], 
\left[\begin{array}{c}3 \\kt\end{array}\right] \text{ and }
\left[\begin{array}{c}6 \\k^3t+lt\end{array}\right],
\]
all commute with each other. The same argument follows for $x_{-}(t)$.

For (ii), we first calculate $n_+(t)$. So we must simplify
\[\left[\begin{array}{c}2 \\t^2\end{array}\right] 
\left[\begin{array}{c}3 \\kt\end{array}\right] 
\left[\begin{array}{c}6 \\k^3t+lt\end{array}\right]
\left[\begin{array}{c}-2\\t^{-2}\end{array}\right] 
\left[\begin{array}{c}1 \\kt^{-1}\end{array}\right]
\left[\begin{array}{c}5 \\lt^{-1}\end{array}\right]
\left[\begin{array}{c}2 \\t^2\end{array}\right] 
\left[\begin{array}{c}3 \\kt\end{array}\right] 
\left[\begin{array}{c}6 \\k^3t+lt\end{array}\right]\]
We will move all $\pm\alpha_2$ root elements to the left. The result of this calculation is
\[n_+(t)=\left[\begin{array}{c}2 \\t^2\end{array}\right] 
\left[\begin{array}{c}-2\\t^{-2}\end{array}\right]
\left[\begin{array}{c}2 \\t^2\end{array}\right] 
\left[\begin{array}{c}4 \\k^2\end{array}\right]=n_{2}(t^2)x_{4}(k^2).\]
Now it is easy to write down $h_+(t)$. Since $x_{\pm 2}(t)$ commute with $x_{4}(u)$ as $\alpha_4\pm\alpha_2$ are not roots we have 
\begin{align*}
h_+(t)&=\left[\begin{array}{c}2 \\t^2\end{array}\right] 
\left[\begin{array}{c}-2\\t^{-2}\end{array}\right]
\left[\begin{array}{c}2 \\t^2\end{array}\right] 
\left[\begin{array}{c}4 \\k\end{array}\right]
\left[\begin{array}{c}2 \\1\end{array}\right] 
\left[\begin{array}{c}-2\\1\end{array}\right]
\left[\begin{array}{c}2 \\1\end{array}\right] 
\left[\begin{array}{c}4 \\k\end{array}\right]\\
&=\left[\begin{array}{c}2 \\t^2\end{array}\right] 
\left[\begin{array}{c}-2\\t^{-2}\end{array}\right]
\left[\begin{array}{c}2 \\t^2\end{array}\right] 
\left[\begin{array}{c}2 \\1\end{array}\right] 
\left[\begin{array}{c}-2\\1\end{array}\right]
\left[\begin{array}{c}2 \\1\end{array}\right]\\
&=n_{2}(t^2)n_{2}(1)\\
&=h_{2}(t^2).
\end{align*}
It is then immediate that (ii) follows, since it holds for $h_{2}(t)$.
Part (iii) is similar.\end{proof}

Notice that $X_{0,0}=\bar L_0$ and so $X$ is not conjugate to $X_{0,0}$ by our standing assumption. The next two lemmas are necessary to show that the groups $X_{k,l}$ exhaust all closed complements to $Q$ in $\bar L_0 Q$.

\begin{lemma} The groups $X_{k,0}Q(2)$ are distinct up to $Q/Q(2)$-conjugacy in $XQ/Q(2)$ and so form a space isomorphic to $H^1(XQ(2)/Q(2),Q/Q(2))$.\end{lemma}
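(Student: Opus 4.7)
The approach is to invoke the standard bijection between complements to an abelian normal subgroup and first cohomology, and then verify by direct calculation that the $X_{k,0}$ yield distinct cohomology classes and so fill up the one-dimensional space $H^1(X, Q/Q(2))$.

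First, I would identify the cohomology group. By Lemma \ref{abslem}, $Q/Q(2)$ has $T$-highest weight $(1,1)$ and is thus the natural two-dimensional module $L(1)$ for $\bar L_0$. Under the standing reduction that $X \to \bar L_0$ is the Frobenius of degree $q=2$, we have $Q/Q(2) \cong L(1)^{(2)} = 1^{(2)}$ as an $X$-module; Lemma 2.1 then gives $H^1(X, 1^{(2)}) \cong K$, one-dimensional. Since $Q/Q(2)$ is an abelian normal subgroup of $\bar L_0 Q/Q(2) = XQ/Q(2)$, its $Q/Q(2)$-conjugacy classes of complements to $Q/Q(2)$ are parametrized by this $H^1$.

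Next, I would compute the cocycle $c_k\colon X \to Q/Q(2)$ representing $X_{k,0}Q(2)/Q(2)$ and show $k \mapsto [c_k]$ is injective. Reading from the definition, modulo $Q(2)$ we have $x_+(t) = x_2(t^2)x_3(kt)$, so $c_k(x_+(t)) = x_3(kt)$, a function linear in the $X$-parameter $t$. A coboundary $c_v$ coming from $v = x_1(\alpha)x_3(\beta) \in Q/Q(2)$ is $c_v(g) = g^{-1}\cdot v - v$; using the Chevalley commutator formula for $[x_1, x_2]$ in $G_2$ (modulo $Q(2)$, only the $x_3$-piece survives) one finds its $x_3$-component is a scalar multiple of $\alpha t^2$, quadratic in $t$. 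Hence $c_k - c_{k'}$ has $x_3$-component $(k-k')t$, which is not of the form $C\alpha t^2$ unless $k=k'$; so $[c_k] \neq [c_{k'}]$ in $H^1$ for $k \ne k'$. The assignment $k \mapsto [c_k]$ is $K$-linear in $k$, so this injection $K \hookrightarrow H^1(X, Q/Q(2)) \cong K$ is an isomorphism, yielding simultaneously the pairwise distinctness of the $X_{k,0}Q(2)$ and the claimed isomorphism with $H^1$.

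The main obstacle is the explicit commutator computation: one must verify that, modulo $Q(2)$, conjugating $x_2(s)$ by $v = x_1(\alpha)x_3(\beta)$ introduces only a term in $x_3$ with the anticipated linear dependence on $\alpha s$ (the $x_3(\beta)$ contribution being trivial since $\alpha_3+\alpha_2$ is not a root of $G_2$). Once this degree mismatch between cocycle (degree one in $t$) and coboundary (degree two in $t$) is in hand, the conclusion follows from the dimension count via Lemma 2.1.
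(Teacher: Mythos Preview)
Your argument is correct and is essentially the same as the paper's: both compute that conjugating $x_2(t^2)x_3(kt)$ by an element $x_1(\alpha)x_3(\beta)\in Q/Q(2)$ alters the $x_3$-coordinate by a term $\alpha t^2$, so equality with $x_2(t^2)x_3(k't)$ forces $k=k'$ by comparing degrees in $t$. The only difference is presentational: you phrase this degree mismatch directly as ``cocycle linear in $t$, coboundary quadratic in $t$'' and then invoke the dimension of $H^1$ from Lemma 2.1, whereas the paper carries out the conjugation explicitly and inserts an argument via canonical forms in $PSL_2(K)$ to reduce subgroup-conjugacy to pointwise equality of root elements before reading off the same equation $c_1 t^2 + kt = k't$.
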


\begin{proof} $X_{k,0}Q(2)/Q(2)$ is generated by root groups $x_{+,k}(t)=x_2(t^2)x_3(kt)Q(2)$ and $x_{-,k}(t)=x_{-2}(t^2)x_1(kt)Q(2)$. Take a fixed, arbitrary element of $Q/Q(2)$, $g:=x_1(c_1)x_3(c_2)Q(2)$. Conjugating $x_{+,k}(t)$ by $g$ we get \[x_{+,k}(t)^g=x_2(t^2)x_3(c_1t^2+kt)Q(2)\] and accordingly for $x_{-,k}(t)^g$. Suppose these generate $X_{k',0}Q(2)/Q(2)$. Then we have an automorphism of $X_{k',0}Q(2)/Q(2)\cong PSL_2(K)$ extending the map $x_{+,k'}(t)\to x_{+,k}(t)\to x_{+,k}(t)^g$. This is an inner automorphism. So we must have both root groups $x_{+,k'}(t)$ and $x_{+,k}(t)^g$ conjugate, say
\[ x_{+,k'}(t)^{hQ(2)}=(x_2(t^2)x_3(k't)Q(2))^{hQ(2)}=x_2(t^2)x_3(c_1t^2+kt)Q(2)=x_{+,k}(t)^g\] for some $hQ(2)\in X_{k',0}Q(2)/Q(2)$. In particular they are conjugate modulo $Q$ in $X_{k',0}Q/Q$ by $hQ$. Then since $x_{+,k'}(t)Q=x_{+,k}(t)^gQ=x_2(t^2)Q$, $hQ$ must centralise $x_2(t^2)Q$ in $X_{k',0}Q/Q$. It follows that \[ \hspace{50pt} hQ=x_2(u_1)Q\hspace{50pt} (*)\] for some $u_1\in K$. 

Now, using the canonical form of \cite[8.4.4]{ca} any element $h$ of $X_{k,0}Q(2)/Q(2)$ is uniquely expressible as either  \begin{align*}h&=x_{+,k'}(v_1)h_2(v_2)Q(2) \text{ \ \ or}\\ h&=x_{+,k'}(v_1)h_2(v_2) n_2 x_{+,k'}(v_3)Q(2)\end{align*} where $n_2$ is a representative of the non-identity element of the Weyl group of $X_{k',0}Q(2)/Q(2)$. In the latter case, observe that modulo $Q$ we have $h=x_2(u_1^2)h_2(u_2)n_2x_2(u_3^2)Q$ which does not centralise $x_2(t^2)$ as it is not of the (unique) form (*) -- a contradiction. In the former case, observe that $v_2=1$ by (*) and so $hQ(2)$ centralises $x_{+,k'}(t)$.  So $c_1t^2+kt=k't$ for all $t\in K$. As there are at least four elements $t\in K$ this is impossible unless $c_1=0$ and $k=k'$. 

Lastly, to see that these complements form a space isomorphic to the space $H^1(XQ(2)/Q(2),Q/Q(2))$, observe that $X_{k,0}Q(2)$ is the closed complement corresponding to a rational cocycle $\gamma_k$, and we can define an addition $\gamma_k+\gamma_{k'}=\gamma_{k+k'}$ which is evidently well-defined on equivalence classes making the collection into a one-dimensional vector space as required.\end{proof}

\begin{lemma} The group $X_{k,l}$ is not conjugate to $X_{k,l'}$ by $Q(3)$ for $l\neq l'$. Thus for a fixed $k$, the groups $X_{k,l}$ form a space isomorphic to $H^1(X,Q(3))$.\end{lemma}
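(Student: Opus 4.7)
The plan is to adapt the method of the previous lemma by working one filtration level deeper: instead of computing modulo $Q(2)$, I conjugate the explicit generators of $X_{k,l}$ by an arbitrary element $g = x_5(a)x_6(b) \in Q(3)$ and compare. Using the positive roots $\{(1,0),(0,1),(1,1),(2,1),(3,1),(3,2)\}$ of $G_2$, since $\alpha_6 = (3,2)$ is the highest root, $x_6$ commutes with every positive root group; and among the commutators involving $x_5$ or $x_6$ that can arise here, the only nonzero ones are $[x_5(a), x_2(t^2)] = x_6(Ca\,t^2)$ and $[x_6(b), x_{-2}(t^2)] = x_5(Db\,t^2)$, for structure constants $C,D$ coming from the $(i,j)=(1,1)$ term of the Chevalley formula, hence equal to $\pm 1$ and in particular nonzero in characteristic $2$.

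Carrying out the conjugation (moving $x_5(a)$ past $x_2(t^2)$ in $x_+(t)$, and $x_6(b)$ past $x_{-2}(t^2)$ in $x_-(t)$) yields
\begin{align*}
g\,x_+(t)\,g^{-1} &= x_2(t^2)x_3(kt)x_6\bigl(k^3 t + lt + Ca\,t^2\bigr),\\
g\,x_-(t)\,g^{-1} &= x_{-2}(t^2)x_1(kt)x_5\bigl(lt + Db\,t^2\bigr).
\end{align*}
Since $g \in Q(3)$ acts trivially modulo $Q(3)$, each element of $X_{k,l}Q(3)/Q(3) = X_{k,l'}Q(3)/Q(3)$ has a unique lift in each complement to $Q(3)$. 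Hence $gX_{k,l}g^{-1} = X_{k,l'}$ would force the above expressions to equal $x_+^{k,l'}(t)$ and $x_-^{k,l'}(t)$ term by term. Comparing the $x_6$-coefficients gives the polynomial identity $Ca\,t^2 = (l'-l)t$ in $t \in K$; since $|K| \geq 4$ this forces $a = 0$ and $l = l'$, and comparing $x_5$-coefficients similarly forces $b = 0$. Thus $l \neq l'$ implies $X_{k,l}$ and $X_{k,l'}$ are non-conjugate by $Q(3)$.

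For the second assertion, the cocycle $\gamma_l$ describing $X_{k,l}$ relative to the reference complement $X_{k,0}$ is given on generators by $x_+^{k,l}(t) = x_+^{k,0}(t) \cdot x_6(lt)$ and $x_-^{k,l}(t) = x_-^{k,0}(t) \cdot x_5(lt)$, so $\gamma_l = l\,\gamma_1$ is $K$-linear in $l$. Hence $l \mapsto [\gamma_l]$ is a $K$-linear map $K \to H^1(X, Q(3))$, which the first part shows is injective. Since $Q(3)$ is the two-dimensional high-weight-$2$ module $V_X(2) = 1^{(2)}$ in characteristic $2$, Lemma 2.1 gives $\dim_K H^1(X, Q(3)) = 1$, and the map is forced to be an isomorphism. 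The main obstacle I anticipate is verifying nonvanishing of the structure constants $C$ and $D$ in characteristic $2$; once those are in hand everything else is routine bookkeeping with the $G_2$ commutator formulas and the observation that $Q(3)$ sits in the $(3,1)$- and $(3,2)$-root spaces.
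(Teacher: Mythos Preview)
Your proof is correct and follows the approach the paper intends: the paper's own proof reads in its entirety ``The proof is similar to that of the previous lemma,'' and your argument is precisely the analogue of Lemma~3.3 carried out at level~$3$ rather than level~$2$. Your commutator computations are right (both $N_{\alpha_5,\alpha_2}$ and $N_{\alpha_6,-\alpha_2}$ are $\pm 1$ since the relevant root strings have $r=0$, so they survive in characteristic~$2$), and your dimension count for $H^1(X,Q(3))$ via Lemma~2.1 is correct. One minor remark: your unique-lift argument (using that $g\in Q(3)$ fixes $Q(3)$-cosets, so equal complements force equal lifts element-by-element) is actually cleaner than the inner-automorphism manoeuvre the paper uses in 3.3, and it would have worked there too; but the content is the same.
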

\begin{proof} The proof is similar to that of the previous lemma.
\end{proof}

\begin{prop} $X$ is $Q$-conjugate to $X_{k,l}$ for some $k,l\in K$, $k,l$ not both $0$.\end{prop}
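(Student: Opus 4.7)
The plan is to use the filtration $Q \supset Q(2) \supset Q(3) \supset 1$ and conjugate $X$ into the desired form one layer at a time, adjusting by elements of $Q$, $Q(2)$ and $Q(3)$ successively. At each stage I exploit the fact that the complements to the next quotient are classified by the appropriate $H^1$.

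First I would project to $\bar L_0 Q/Q(2)$. Then $XQ(2)/Q(2)$ is a closed complement to $Q/Q(2)$ in $\bar L_0 Q/Q(2)$. By Lemma 3.3 (combined with Lemma 2.1, which says $H^1(\bar L_0, Q/Q(2))=H^1(X,1^{(2)})$ is one-dimensional) the groups $X_{k,0}Q(2)/Q(2)$ for $k \in K$ exhaust these complements up to $Q/Q(2)$-conjugacy. So after replacing $X$ by a $Q$-conjugate we may assume $XQ(2) = X_{k,0}Q(2)$ for some $k$; in particular $X \le X_{k,0}Q(2)$ is a closed complement to $Q(2)$ there.

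Next I reduce modulo $Q(3)$. Both $XQ(3)/Q(3)$ and $X_{k,0}Q(3)/Q(3)$ are complements to $Q(2)/Q(3)$ in $X_{k,0}Q(2)/Q(3)$. Since $Q(2)/Q(3)$ is a trivial $\bar L_0$-module (weight $0$), we have $H^1(X, Q(2)/Q(3)) = \mathrm{Hom}(X,K) = 0$ because $X$ is perfect. Hence, up to $Q(2)/Q(3)$-conjugacy there is a unique complement, and I may conjugate $X$ by an element of $Q(2)$ so that $XQ(3) = X_{k,0}Q(3)$. Consequently $X \le X_{k,0}Q(3)$ is a closed complement to $Q(3)$ in $X_{k,0}Q(3)$ (using $X\cap Q(3) \le X\cap Q = 1$).

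Finally I apply Lemma 3.4: for this fixed $k$, the groups $X_{k,l}$ with $l\in K$ give all closed complements to $Q(3)$ in $X_{k,0}Q(3)$ up to $Q(3)$-conjugacy (matching the one-dimensional $H^1(X, Q(3))$ given by Lemma 2.1 applied to the weight-$2$ module $Q(3)$). So after one more $Q(3)$-conjugation we have $X = X_{k,l}$ for some $l$. Composing these three conjugations shows $X$ is $Q$-conjugate to some $X_{k,l}$, and the standing assumption that $X$ is not conjugate to $\bar L_0 = X_{0,0}$ forces $(k,l)\ne(0,0)$. The main potential obstacle is the middle step, where one must verify that the trivial layer $Q(2)/Q(3)$ really does collapse to give a unique complement; this rests on $X$ being perfect so that $\mathrm{Hom}(X,K)$ vanishes, which is the standard fact that makes the filtration argument telescope to the two honest cohomology groups captured by Lemmas 3.3 and 3.4.
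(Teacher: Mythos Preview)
Your proposal is correct and follows essentially the same three-step filtration argument as the paper: pass successively through $Q/Q(2)$, $Q(2)/Q(3)$, and $Q(3)$, invoking Lemmas 3.3 and 3.4 at the two weight-$2$ layers and the vanishing of $H^1$ at the trivial middle layer, then use the standing hypothesis to exclude $(k,l)=(0,0)$. Your justification of the middle step via $\mathrm{Hom}(X,K)=0$ is a minor elaboration of what the paper leaves implicit, but the overall structure is identical.
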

\begin{proof} Firstly, observe that $XQ(2)/Q(2)$ must also be a complement to $Q/Q(2)$ in $X Q/Q(2)$. As $Q/Q(2)$ is a module for $X$ of high weight 2,  $H^1(X,Q/Q(2))=K$ and $X Q/Q(2)$ admits a one-dimensional collection of complements to $Q/Q(2)$. By 3.3 these are represented by $X_{k,0}Q(2)$. Replace $X$ by a $Q$-conjugate to have $XQ(2)=X_{k,0}Q(2)$.

Now observe $XQ(3)/Q(3)$ is a complement to $Q(2)/Q(3)$ in $X_{k,0}Q(2)/Q(3)$. As $Q(2)/Q(3)$ is a trivial module for $X$, we have $H^1(X,Q(2)/Q(3))=0$ and we may replace $X$ by a $Q$-conjugate to have  $XQ(3)=X_{k,0}Q(3)$.

Finally, observe that $X$ is a complement to $Q(3)$ in $X_{k,0}Q(3)$. As $Q(3)$ is a module for $X$ of high weight $2$, $H^1(X,Q(3))=K$ and $X_{k,0}Q(3)$ admits a one-dimensional collection of complements to $Q(3)$. By 3.4 these are represented by $X_{k,l}$. Thus we may replace $X$ by a $Q$-conjugate to have $X=X_{k,l}$.

Now, if $k=l=0$ then visibly $X_{k,l}\leq \bar L_0$ which we had earlier assumed was not the case.
 \end{proof}

\begin{lemma} The group $X_{k,l}$ is $P$-conjugate to one of $X_{1,0}$ or $X_{0,1}$.
\end{lemma}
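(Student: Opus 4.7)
The plan is to use $P$-conjugation to move $X_{k,l}$ into one of the two normal forms. Since any $Q$-conjugation preserves the pair $(k,l)$ (by the uniqueness inherent in Lemmas 3.3, 3.4 and Proposition 3.5), every nontrivial change in the parameters must be accomplished by conjugation with elements of the Levi $\bar L=T\bar L_0$.

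My first step would be to compute the torus action. An argument like that in Lemma 3.2, keeping careful track of $\alpha_i(h_1(a)h_2(b))$ for each root $\alpha_i$ appearing in $x_\pm(t)$, and then absorbing the different reparametrizations on the $x_+$- and $x_-$-sides by an inner automorphism of $A_1$ (available since squaring is bijective on $K$ in characteristic $2$), shows that conjugation by $h_1(a)h_2(b)\in T$ has the net effect $(k,l)\mapsto (a^{1/2}k,\, a^{3/2}l)$, with the $b$-direction contributing nothing. This instantly disposes of the boundary cases: if $k=0$ I take $a^{3/2}=l^{-1}$ to send $X_{0,l}$ to $X_{0,1}$, and if $l=0$ I take $a=k^{-2}$ to send $X_{k,0}$ to $X_{1,0}$.

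For $k,l$ both nonzero I first use the torus to reduce to $X_{1,m}$ with $m=l/k^3\in K^\ast$, and then conjugate by a Weyl representative $n_2\in N_{\bar L_0}(T)$. Using that $s_{\alpha_2}$ swaps $\alpha_2\leftrightarrow -\alpha_2$, $\alpha_3\leftrightarrow\alpha_1$ and $\alpha_5\leftrightarrow\alpha_6$, a direct substitution into the formulas for $x_\pm(t)$ gives $n_2 X_{k,l}n_2^{-1}=X_{k,\,l+k^3}$. Thus the $\bar L$-orbit of $(1,0)$ contains both the torus curve $\{(c,0):c\in K^\ast\}$ and the point $(1,1)=n_2\cdot(1,0)$, and therefore the torus curve $\{(c,c^3):c\in K^\ast\}$ as well. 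Since these are two distinct irreducible curves in $K^2$, the Zariski closure of the orbit of $(1,0)$ is $2$-dimensional. The locus $\{k=0\}$ is $\bar L$-stable (it is the zero locus of the $\bar L$-equivariant character on the one-dimensional quotient $H^1(X,Q/Q(2))$), so the orbit lies in the irreducible open subvariety $\{k\neq 0\}$ and, being a $2$-dimensional irreducible constructible subset of it, must coincide with it. This gives $X_{1,m}\sim_P X_{1,0}$.

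The main obstacle is making the irreducibility argument of the previous paragraph fully rigorous, since in principle $\{k\neq 0\}$ could a priori break up into a $2$-dimensional orbit plus lower-dimensional ones. The safest way to close this gap is an explicit conjugation by a unipotent root element of $\bar L_0$, say $x_{-2}(s)$: it commutes with $x_-(t)$ of $X_{k,l}$ (since none of $\alpha_1-j\alpha_2$ or $\alpha_5-j\alpha_2$, $j\geq 1$, is a root in $G_2$), while its action on $x_+(t)$ produces, via the $SL_2$-Bruhat identity for $x_{-2}(s)x_2(t^2)x_{-2}(-s)$ plus corrections from $[x_{-2},x_3]$ (giving $x_1,x_4,x_5,x_6$ terms) and $[x_{-2},x_6]$ (giving $x_5$), a new complement whose canonical form can be read off after a final $Q$-conjugation. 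Working through these $G_2$ commutators in characteristic $2$ verifies that $x_{-2}(s)$ supplies exactly the extra direction needed to fill out the $2$-dimensional orbit, completing the reduction.
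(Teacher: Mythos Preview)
Your opening premise is wrong, and it is precisely this error that sends you down an unnecessarily elaborate path. You assert that $Q$-conjugation preserves the pair $(k,l)$, citing Lemmas~3.3, 3.4 and Proposition~3.5. But those results only say that $Q/Q(2)$-conjugation preserves $k$ and that $Q(3)$-conjugation preserves $l$; they say nothing about conjugation by elements of $Q(2)\setminus Q(3)$. In fact such conjugation does \emph{not} preserve $l$: the paper's proof simply conjugates $X_{k,l}$ by the single element $x_4(l/k)\in Q(2)\leq Q$ (when $k\neq 0$) and reads off $x_4(l/k)\,X_{k,l}\,x_4(l/k)^{-1}=X_{k,0}$ directly from the commutator relations $[x_3,x_4]$ and $[x_1,x_4]$. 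A torus element $h_4(k)^{-1}$ then rescales to $X_{1,0}$, and similarly $h_4(c)^{-1}$ with $c^3=l$ handles the case $k=0$. The whole argument is two lines of Chevalley commutator arithmetic.

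Because you ruled out $Q$ prematurely, you are forced into the $n_2$-plus-dimension argument, which you yourself flag as not airtight: the torus together with $n_2$ only gives you the additive coset $m+\mathbb F_2$ of values of $l/k^3$, not all of $K^\ast$, and your Zariski-closure reasoning does not exclude a lower-dimensional orbit stratification of $\{k\neq 0\}$. Your fallback, conjugation by $x_{-2}(s)$, is merely sketched (``working through these $G_2$ commutators \ldots verifies \ldots'') rather than carried out, so as written the case $k,l\in K^\ast$ is not actually proved. The fix is to drop the false invariance claim and use the $x_4$-conjugation inside $Q$, as the paper does.
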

\begin{proof} If $k\neq 0$, we can conjugate the generators of $X_{k,l}$ by the fixed element $x_{4}(l/k)$ by repeated use of Chevalley's commutator formula to get that \[x_{4}(l/k) X_{k,l} x_{4}(l/k)^{-1}=X_{k,0}.\] For instance, \begin{align*}  x_{4}(l/k)x_+(t)x_{4}(l/k)^{-1}&=x_{4}(l/k)x_2(t^2)x_3(kt)x_6(k^3t+lt)x_{4}(l/k)\\ &=x_2(t^2)x_3(kt)x_6(lt)x_6(k^3t+lt)\\&=x_2(t^2)x_3(kt)x_6(k^3t),\end{align*}
and the analogous calculation holds for the negative root group. Similarly we calculate that 
\[h_{4}(k)^{-1} X_{k,0} h_{4}(k)=X_{1,0}.\]
If $k=0$, and $l\neq 0$, again a calculation on the generators shows
\[h_{4}(c)^{-1} X_{0,l} h_{4}(c)=X_{0,1},\] where $c$ is any cube root of $l$.
\end{proof}

\begin{lemma} The groups $X_{1,0}$ and $X_{0,1}$ are conjugate to $Z_1$ and $Z_2$ respectively. The subgroups $Z_1$, $Z_2$ and $\bar L_0$ are pairwise non-conjugate in $G$. \end{lemma}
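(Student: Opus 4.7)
My plan is to tackle the pairwise non-conjugacy of $\bar L_0$, $Z_1$ and $Z_2$ first, so that the three $V_7$-restrictions serve as distinguishing invariants for the two identification claims. Lemma \ref{table} gives
\[V_7{\downarrow}\bar L_0 = 1\oplus 1\oplus 0^3,\ \ V_7{\downarrow}Z_1 = T(2)\oplus W(2),\ \ V_7{\downarrow}Z_2 = W(2)\oplus W(2)^*\oplus 0.\]
The first is semisimple while the others are not, and $V_7{\downarrow}Z_1$ has the $4$-dimensional indecomposable $T(2)$ as a direct summand while every indecomposable summand of $V_7{\downarrow}Z_2$ has dimension at most $3$. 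So these three $KA_1$-modules are pairwise non-isomorphic, forcing the three subgroups to be pairwise non-conjugate in $GL(V_7)\supset G$.

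For $X_{0,1}\sim Z_2$: the generators of $X_{0,1}$ involve only the root groups $x_{\pm\alpha_2},x_{\alpha_5},x_{\alpha_6}$, and since $\alpha_6=\alpha_2+\alpha_5$ these lie in the long-root $A_2$-subsystem, so $X_{0,1}$ is contained in the long-root $A_2$-subgroup. Writing $A_2$ as $SL_3$, the generators become $I+t^2E_{12}+tE_{13}$ and $I+t^2E_{21}+tE_{23}$; a short matrix calculation shows the natural module restricts to $X_{0,1}$ as an indecomposable module with $2$-dimensional socle (isomorphic to $W(2)^*$). In particular $X_{0,1}$ is not contained in a Levi $A_1$ of $A_2$, whose natural-module restriction would decompose, so $X_{0,1}$ is a principal $A_1$ of $A_2$. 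Such a principal subgroup is unique in $A_2$ up to conjugacy and $Z_2$ is also principal by definition, so $X_{0,1}\sim Z_2$ in $A_2$, hence in $G$.

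For $X_{1,0}\sim Z_1$: I first show $Z_1$ is non-$G$-cr. Since $V_7=W_G(\omega_1)$ has trivial socle $L(0)$ with $V_7/L(0)\cong L(\omega_1)$ irreducible, the restriction $L(\omega_1){\downarrow}Z_1$ is a quotient of $T(2)\oplus W(2)$ by some $1$-dimensional trivial subspace; a case analysis (whether the $G_2$-socle lies in the socle of $T(2)$, of $W(2)$, or diagonally in their direct sum) shows this quotient is non-semisimple in every case. By Serre's theorem \cite{Serre}, a $G$-cr subgroup restricts semisimple $G$-modules semisimply, so $Z_1$ is non-$G$-cr. Lemma \ref{inlong} then puts $Z_1$ inside the long-root parabolic $P$ as a complement to $Q$ in $\bar L_0 Q$ non-conjugate to $\bar L_0$, and Proposition 3.5 together with Lemma 3.6 shows $Z_1$ is $G$-conjugate to either $X_{1,0}$ or $X_{0,1}$. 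Since $Z_1\not\sim Z_2\sim X_{0,1}$ by the first two paragraphs, we conclude $Z_1\sim X_{1,0}$.

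The main obstacle is the matrix computation in the second paragraph, where one must verify that the natural $A_2$-module restricts indecomposably to $X_{0,1}$ (thereby ruling out the Levi $A_1\le A_2$), with attention paid to the Chevalley structure constants in characteristic $2$; the remaining steps are essentially comparisons of module invariants supplied by Lemma \ref{table}.
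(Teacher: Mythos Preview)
Your non-conjugacy argument and your treatment of $X_{0,1}\sim Z_2$ are essentially correct. The paper handles $X_{0,1}\sim Z_2$ by directly computing the root-group generators of $Z_2$ inside $A_2$ and matching them with those of $X_{0,1}$, whereas you argue via the action on the natural $A_2$-module; both routes work. One small caution: in characteristic $2$ an $A_1$ acting on the natural $SL_3$-module as $W(2)$ is not $SL_3$-conjugate to one acting as $W(2)^*$, so ``the principal $A_1$ is unique up to conjugacy'' is not literally true; what you need, and what your matrix computation together with the definition of $Z_2$ via $\mathrm{Sym}^2$ does give, is that $X_{0,1}$ and $Z_2$ realise the \emph{same} one of these two modules.

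The genuine gap is in the argument for $X_{1,0}\sim Z_1$. The assertion that a $G$-cr subgroup restricts every simple $G$-module semisimply is not a theorem of Serre, and it is false: your own $Z_1$ already illustrates this at the level of $M=A_1\tilde A_1$. The diagonal $A_1$ surjects onto each factor and so lies in no proper parabolic of $M$, hence is $M$-cr, yet the simple $M$-module $1\otimes\tilde 1$ restricts to it as $T(2)$, which is not semisimple. Thus your contrapositive does not place $Z_1$ in any parabolic of $G$, and the rest of that paragraph has no foundation. The paper closes this gap geometrically: from $V_7{\downarrow}Z_1=T(2)\oplus W(2)$ one sees that $Z_1$ fixes a $1$-space of $V_6=L(\lambda_1)$ (indeed your case analysis already shows the trivial $Z_1$-socle of $V_7$ is $2$-dimensional while the $G$-socle is $1$-dimensional, so a trivial submodule survives in $V_6{\downarrow}Z_1$); since $G$ is transitive on $\mathbb{P}(V_6)$ with parabolic point-stabilisers (\cite[Thm~B]{lss}), $Z_1$ lies in a parabolic, and Lemma~\ref{inlong} together with the distinct $V_7$-restrictions forces it into the long one, after which your elimination via 3.5 and 3.6 is correct.
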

\begin{proof}
The construction of $Z_2$ as a subgroup of $A_2$ acting on the symmetric square representation allows us to calculate its root groups in terms of those of $A_2$. As the $A_2$ is a subsystem of $G$ it is easy to write these generators in terms of the root groups of $G$. Choosing the embeddings appropriately, one sees that $X_{0,1}$ has precisely the same generators, hence is conjugate to $Z_2$.

Next, for $p=2$, the module $V_7=W(\lambda_1)$ for $G$ is reducible and has a trivial submodule, so $V_7=V_6/0$ with $G\leq Sp(V_6)$. From the restriction $V_6\downarrow Z_1=W(2)\oplus W(2)^*$  in 2.3 we see that $Z_1$ stabilises a 1-space of $V_6$. Since the stabliser of a 1-space is parabolic, and $G$ acts transitively on all such by \cite[Thm B]{lss}, it follows that $Z_1$ is in a parabolic subgroup of $G$. Since it has a different restriction to $\tilde L_0$ it follows that from \ref{inlong} that it is in the long parabolic of $G$.

Now examine all the restrictions $V_7\downarrow Z_1, Z_2$ and $\bar L_0$ given by \ref{table}. One sees that they are all distinct. It follows that they are all distinct up to $G$-conjugacy. It now follows from \ref{inlong} that $X_{1,0}$ is in a long parabolic, not conjugate to $Z_2$ or $\bar L_0$ and so must be conjugate to $Z_1$.
\end{proof}

In conclusion we have established that a complement $X$ to $Q$ in $\bar L_0 Q$ must be conjugate to precisely one of the subgroups $Z_1, Z_2$ or $\bar L_0$. Together with $3.1$, this completes the proof of Theorem 1, and Corollary 2.

\section{Classification of semisimple subgroups of $G_2$: proof of Corollary 3}

In the proof of Corollary 3 we need the classification of maximal subgroups of the algebraic group $G=G_2(K)$, from \cite{ls2}.

\begin{lemma} Let M be a maximal closed connected subgroup of $G$. Then $M$ is one of the following:
\begin{enumerate}\item a maximal parabolic subgroup;\item a subsystem subgroup of maximal rank;\item $A_1$ with $p\geq 7$.\end{enumerate}\end{lemma}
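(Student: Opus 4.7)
The statement is the classification of maximal closed connected subgroups of $G_2$ (attributed to Liebeck--Seitz), so my plan is really to sketch how one arrives at the three cases rather than to redo the full Liebeck--Seitz classification. The argument splits naturally along whether $M$ is reductive, and if so, whether it has maximal rank.

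First I would handle the non-reductive case. Suppose $R_u(M)\neq 1$. By the Borel--Tits theorem, the normaliser $N_G(R_u(M))$ is a proper parabolic subgroup $P$ of $G$ containing $M$. By maximality of $M$ we get $M=P$, and then among parabolics, maximality forces $P$ to be one of the two maximal parabolics. This gives case (i).

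Next suppose $M$ is reductive. Let $T_M$ be a maximal torus of $M$ and compare its rank with that of $G$. If $\operatorname{rk}(M)=2$, then $T_M$ is also a maximal torus of $G$, and I would invoke Borel--de Siebenthal: any connected reductive subgroup containing a maximal torus of $G$ is a subsystem subgroup, determined by a closed subsystem of the root system of $G_2$ (with the caveat in characteristic $3$ accounting for $\tilde A_2$). The proper maximal such subsystems of rank $2$ in $G_2$ are $A_2$ (long roots), $A_1\tilde A_1$, and $\tilde A_2$ when $p=3$, giving case (ii). If instead $\operatorname{rk}(M)<2$, then $\operatorname{rk}(M)=1$: indeed $M$ cannot be trivial, and if $M$ were a $1$-dimensional torus it would lie in a $2$-dimensional torus, hence in a Borel, contradicting maximality. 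So the derived group $M'$ is a non-trivial semisimple group of rank $1$, i.e.\ of type $A_1$. Since any central torus in $M$ would be contained in a maximal torus of $G$ and would enlarge $M$ inside its centraliser, $M=M'\cong A_1$.

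Finally I would address the constraint $p\geq 7$ in case (iii). This is where the real work lies and where I would simply quote \cite{ls2} (Liebeck--Seitz's classification of maximal connected subgroups of exceptional algebraic groups), the heart of whose argument for $G_2$ is that a maximal $A_1$ acts irreducibly on $V_7$ via the $6$-dimensional irreducible representation, forcing $p=0$ or $p\geq 7$ for this module to remain irreducible and for the embedding to exist. In small characteristics the potential candidate $A_1$s turn out to lie in proper Levi or subsystem subgroups (as Theorem~1 of the present paper confirms from the other direction, by locating every non-maximal $A_1$ in a parabolic), ruling out maximality. The main obstacle is precisely this last step: without the Liebeck--Seitz result one would have to classify $A_1$-subgroups of $G_2$ by hand in every small characteristic and verify that none is maximal, which is exactly the analysis the rest of this paper carries out. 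Given that machinery, the result becomes a clean synthesis of Borel--Tits, Borel--de Siebenthal, and \cite{ls2}.
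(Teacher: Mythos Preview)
The paper does not actually prove this lemma: it is stated as a quotation from \cite{ls2} with no argument given. Your sketch is correct and is precisely the standard route that underlies the cited result---Borel--Tits for the non-reductive case, Borel--de Siebenthal for the maximal-rank reductive case, and then the genuine Liebeck--Seitz analysis for the rank-$1$ case---so you have supplied more than the paper itself does while ultimately appealing to the same source for the hard step.
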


{\it Proof of Corollary 3:}

Firstly, a semisimple subgroup in a parabolic of $G_2$ must be of type $A_1$ and we have determined these by Theorem 1. Secondly, the subsystem subgroups of $G_2$ are well known and can be determined using the algorithm of Borel-de Siebenthal. They are $A_2$, $A_1\tilde A_1$ and $\tilde A_2$ ($p=3$) where the $\tilde A_2$ is generated by the short roots of $G_2$.

Subgroups of maximal rank are unique up to conjugacy so to verify Corollary 3 it remains to check that we have listed all subgroups of type $A_1$ in subsystem subgroups in the table. If $X\cong A_1$ is a subgroup of $A_2$ or $\tilde A_2$ it must be irreducible or else it is in a parabolic; we have listed these in the table in Corollary 3. If $X\leq A_1\tilde A_1$, let the projection to the first (resp. second) factor be an isogeny induced by a Frobenius morphism $x\to x^{(p^r)}$ (resp. $x\to x^{(p^s)}$). We note some identifications amongst these subgroups:

When $p\neq 2$ and $r=s$ (without loss of generality $r=s=0$), $V_7\downarrow X=2\oplus 2\oplus 0$ which is the same as $V_7\downarrow Y$ where $Y:=A_1\hookrightarrow A_2$ where $Y$ acts irreducibly on the natural module for $A_2$. Indeed these are conjugate since $G$ acts transitively on non-singular 1-spaces (see \cite[Thm B]{lss}). When $p=2$ we get the subgroup $Z_1$. When $r=s+1$ and $p=3$, we have $V_7\downarrow X$ is a twist of $V_7 \downarrow Y$ where $Y$ is similarly irreducible in $\tilde A_2$, and we have actually $X$ conjugate to $Y$ up to twists: the long word in the Weyl group $w_0$ induces a graph automorphism on $\tilde A_2$ and it is easy to see that we can arrange the embedding $Y\leq \tilde A_2$ such that $Y\leq C_G (w_0)$. Now $C_G(w_0)=A_1\tilde A_1$ as there is only one class of involutions in $G$ when  $p\neq 2$ by \cite[p288]{iwahori}. The restriction $V_7\downarrow X,Y$ then gives the identification required.

Finally one can see that all other subgroups listed in the table of Corollary 3 are pairwise non-conjugate as the restrictions of $V_7$ in the table are all distinct.

This proves Corollary 3.

\section{Quasi-simple subgroups of $G_\sigma=G_2(q)$: proof of Theorem 2}

Let $X(q_0)$ be a finite quasi-simple subgroup of $G_\sigma=G(q)$, defined over a field of the same characteristic as $G$, where $q,q_0\geq 4$. We classify all such $X(q_0)$. For this we use the classification of maximal subgroups of $G_\sigma$. The following table is obtained from \cite[1.3A]{k} for $p>2$ and \cite{co} for $p=2$.

\begin{lemma} Let $M$ be a maximal subgroup of $G_\sigma=G_2(q)$ where $q=p^n \geq 4$. Then $M$ is conjugate to one of the following groups.
\begin{center}
\begin{tabular}{c c c c}
ID & Group & Structure & Remarks\\
\hline
(i) & $P_a$ & $[q^5]:GL_2(q)$ & parabolic\\
(ii) & $P_b$ & $[q^5]:GL_2(q)$ & parabolic\\
(iii) &$C_G (s_2)$ & $SL_2(q) \circ SL_2(q).(q-1,2)$ & involution centraliser \\
(iv) & $I$ & $2^3 . L_3 (2)$ & $q=p$, odd\\
(v) & $K_+$ & $SL_3(q):2$ & long\\
(vi) & $K'_+$ & $SL_3(q):2$ & $p=3$, short\\
(vii) & $K_-$ & $SU_3(q):2$ &  long\\
(viii) & $K'_-$ & $SU_3(q):2$ &  $p=3$, short\\
(ix) & $C_G(\phi)$ & $G_2(q_1)$ & $q=q_1^\alpha$, $\alpha$ a prime\\
(x) & $C_G(\phi)$ & $^2 G_2(q)$ & $p=3$, $n$ odd\\
(xi) & $PGL_2(q)$ & & $p\geq 7, q\geq 11$\\
(xii) & $L_2(8)$ & & only if $p\geq 5$\\
(xiii) & $L_2(13)$ & & $p\neq 13$, $GF(q)=GF(p)[\sqrt 13]$ \\ & & & or $q=4$\\
(xiv) & $G_2(2)$ & & $q=p\geq 5$\\
(xv) & $J_1$ & & $q=11$\\
(xvi) & $J_2$ & & $q=4$.
\end{tabular}
\end{center}
\end{lemma}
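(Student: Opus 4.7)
The statement is a compilation of two pre-existing classifications and has no novel mathematical content: for $p$ odd the maximal subgroups of $G_2(q)$ were determined by Kleidman \cite{k}, and for $p=2$ by Cooperstein \cite{co}. So my plan is not to reprove anything but rather to read off, reconcile, and transcribe the two lists in a uniform format.

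First I would split according to characteristic. For $p>2$ I would open \cite[1.3A]{k} and copy across the geometric maximal subgroups (the two maximal parabolics $P_a$, $P_b$ of shape $[q^5]{:}GL_2(q)$; the involution centraliser $SL_2(q)\circ SL_2(q).(q-1,2)$; the subsystem subgroups of type $A_2$ and $\,^2\!A_2$, with their short-root variants $K'_\pm$ appearing in characteristic $3$; and the fixed-point subgroups $G_2(q_1)$ of subfield type and $\,^2\!G_2(q)$ of twisted type), together with the finite list of ``exotic'' almost-simple maximal subgroups ($PGL_2(q)$, $L_2(8)$, $L_2(13)$, $G_2(2)$, $J_1$, $J_2$, $2^3.L_3(2)$) and the arithmetic conditions on $q$ under which each occurs. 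For $p=2$ I would use the main theorem of \cite{co} and do the same, noting that the list there is shorter: no odd-involution centraliser, no $\,^2\!G_2$, and no $2^3.L_3(2)$ exotic subgroup.

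Next I would merge the two lists into a single table, paying attention to uniformity of notation. In particular, the entry $C_G(s_2)$ should be restricted to $p$ odd since there is only one class of involutions in that case \cite[p288]{iwahori}; the short-root analogues $K'_\pm$ only appear in characteristic $3$ because of the exceptional isogeny of $G_2$; the $\,^2\!G_2(q)$ subgroup appears only when $p=3$ and $n$ is odd; and the sporadic entries come with arithmetic conditions on $q$ (e.g.\ $\sqrt{13}\in\mathbb{F}_q$ for $L_2(13)$, or $q=11$ for $J_1$) that must be recorded carefully.

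The main obstacle is purely bookkeeping rather than mathematical: ensuring that Kleidman's and Cooperstein's conventions on long versus short subsystem subgroups and on parameter restrictions match the labelling used here. Once these conventions are aligned, the lemma follows immediately from the union of the two cited classifications, which between them cover every $q=p^n\geq 4$.
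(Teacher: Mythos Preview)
Your approach is exactly the paper's: the lemma is stated with no proof beyond the sentence ``The following table is obtained from \cite[1.3A]{k} for $p>2$ and \cite{co} for $p=2$,'' i.e.\ a direct citation to Kleidman and Cooperstein with the two lists merged. One small bookkeeping slip: you list $J_2$ among the odd-characteristic exotics from Kleidman, but it arises only at $q=4$ and so comes from Cooperstein's even-characteristic list; similarly, entry (iii) is not restricted to odd $p$ in the paper's table, since the factor $(q-1,2)$ already accounts for the parity.
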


{\it Proof of Theorem 2:}

If $X(q_0)$ has rank 2 then it is $^2 G_2(q_0)$, $G_2(q_0)$ or $A_2(q_0)$ and one can see that  $X(q_0)\leq M$ where $M$ has ID (v)-(x) of the same type as $X(q_0)$: it is obvious for $X(q_0)$ of rank 2, $M$ cannot be as in cases (i)-(iv) and (xi); for cases (xii)-(xvi) one checks the appropriate pages in the Atlas \cite{atlas}. Such subgroups are unique up to $G_\sigma$-conjugacy by \cite[5.1]{ls4}. Therefore we have $X(q_0)\leq \bar X$ a $\sigma$-stable subgroup of $G$ of the same type.

We now consider the case where $X(q_0)$ has rank 1. Here $X(q_0)\cong A_1(q_0)$. We show that each of these is contained in a $\sigma$-stable connected subgroup of type $A_1\leq G$. Let $X(q_0)\leq M$, a maximal subgroup of $G_\sigma$. Firstly, if $M$ is case (i) or (ii), $X(q_0)\leq P_a$ or $P_b$ and we can use the proof of Theorem 1 to show that $X(q_0)$ is conjugate to a subgroup of a Levi or, when $p=2$, to a subgroup of one of the $\sigma$-stable subgroups $X_{k,l}\cong A_1$ defined above: 2.1 implies the groups $H^1(X,V)$ are still the same for all $q$ and $V$ being considered, 2.2 still applies for finite groups, and so 3.5 goes through to show that $X(q_0)\leq X_{k,l}$, a $\sigma$-stable subgroup of $G$ as required.

If $M$ is as in case (iii), $X(q_0)$ is embedded in $SL_2(q)\circ SL_2(q)$, twisted by $p^r$ on the first factor and $p^s$ on the second. We may assume $p^r,p^s<q$. Since $\sigma$ commutes with the twists on each factor, we have $X(q_0)\leq A_1$ where $A_1\hookrightarrow A_1\tilde A_1;x\mapsto (x^{(p^r)},x^{p^s})$ and is clearly $\sigma$-stable.

If $X(q_0)\leq M$ where $M$ has ID (iv) then $X(q_0)=L_2(7)\cong L_3(2)$. Checking \cite[p60]{atlas} one sees that the subgroup $2^3.L_3(2)$ is a non-split extension with normal subgroup $2^3$ so does not contain a subgroup of type $L_3(2)$.

We cannot have $X(q_0)\leq M$ if $M$ has ID (xii) or (xiii) as these do not contain subgroups of type $A_1(q_0)$, which is easily seen using \cite[p6 and p8]{atlas}

If $M$ has ID (xi), an $A_1(q_0)=L_2(q_0)$ in the $PGL_2(q)$ above is unique up to conjugacy and thus in the $\sigma$-stable maximal $A_1$. 

\begin{lemma} Let $M$ have ID (xiv) or (xv). Then $X(q_0)=L_2(7)$ or $L_2(11)$ resp. and it is conjugate to the subgroup $L_2(7)\leq PGL_2(7)$ or $L_2(11)\leq PGL_2(11)$ resp. with ID (xi) in the above list.\end{lemma}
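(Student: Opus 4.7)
The argument naturally has two stages: first identify the abstract isomorphism type of $X(q_0)$, and then identify its $G_\sigma$-conjugacy class with that of the $L_2(q_0)\leq PGL_2(q)$ of ID (xi).

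\emph{Identification stage.} Since $X(q_0)=A_1(q_0)$ with $q_0\geq 4$ and $q_0$ a power of the ambient characteristic $p$, its order must divide $|M|$. For $M=G_2(2)\cong U_3(3){:}2$ of order $12096$, the Atlas \cite[p.~14]{atlas} list of maximal subgroups, together with order divisibility and a standard trawl through their subgroups, rules out every candidate except $L_2(7)$ (for example $L_2(8)$ has order $504$ dividing $12096$ but does not embed into $U_3(3){:}2$). The analogous inspection of the subgroup structure of $J_1$ via \cite[p.~36]{atlas} leaves $L_2(11)$ as the sole possibility.

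\emph{Conjugacy stage.} I would use the restriction $V_7\downarrow X(q_0)$ as the discriminating invariant. For $L_2(7)\leq G_2(2)$ and $L_2(11)\leq J_1$, the trace of an order $7$ (resp.\ order $11$) element on $V_7$ can be read off from the Atlas character tables. The analogous computation for the embedding $L_2(q_0)\leq PGL_2(q)$ of ID (xi)---which by Lemma \ref{table} comes from the maximal $A_1\leq G$ acting on $V_7$ as the irreducible $V(6)$---produces the same traces. Hence $V_7\downarrow X(q_0)$ matches the restriction of $V_7$ to the $L_2(q_0)$ of ID (xi), and an appeal to \cite[5.1]{ls4} (as used systematically in this section) yields uniqueness up to $G_\sigma$-conjugacy of subgroups with a prescribed embedding into a $\sigma$-stable algebraic overgroup. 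The required conjugacy statement follows.

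The main obstacle I anticipate is in the conjugacy stage: one has to rule out, by comparison against the $V_7$-restrictions listed in Corollary 3's table, the possibility that $X(q_0)$ could be fused into a different $A_1$ of Corollary 3---a Levi $A_1$, one of $Z_1,Z_2$, a subsystem $A_1$ in $A_2$ or $\tilde A_2$, or a Frobenius-diagonal $A_1\hookrightarrow A_1\tilde A_1$. Each such candidate has a $V_7$-restriction visibly incompatible with supporting an $L_2(7)$ or $L_2(11)$ whose order $7$ or $11$ element has the prescribed trace; but verifying this cleanly for every row of the table (and at the relevant small primes $p=5,7,11$ where some of these ambient $A_1$'s may become closely related) is the delicate point.
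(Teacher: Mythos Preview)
Your identification stage matches the paper's: an Atlas lookup pins down $X(q_0)$ as $L_2(7)$ or $L_2(11)$.

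Your conjugacy stage, however, does not close. The paper's argument is: the $7$-dimensional Brauer characters of $L_2(7)\leq G_2(2)$ and of $L_2(7)\leq PGL_2(7)$ (read from the Modular Atlas, not the ordinary Atlas) coincide and are \emph{irreducible}; irreducibility gives conjugacy in $GL_7(7)$, and then \cite[1.5.11]{k} upgrades $GL_7$-conjugacy to $G_2(q_0)$-conjugacy. The same works for $L_2(11)$ inside $J_1$ versus $PGL_2(11)$. Two things in your plan miss this.

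First, comparing the trace of a single order-$7$ (resp.\ order-$11$) element is both insufficient to identify a representation and, more seriously, ill-posed here: in $G_2(7)$ an order-$7$ element is unipotent, so it is not $p$-regular and carries no Brauer character value. You need the full Brauer character on $p'$-classes, and crucially the observation that it is irreducible, to obtain $GL_7$-conjugacy.

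Second, your appeal to \cite[5.1]{ls4} is misplaced. That result says that once $X(q_0)$ is known to sit inside a given $\sigma$-stable $\bar X$, it is unique up to $\bar X_\sigma$-conjugacy; it does \emph{not} produce such a $\bar X$ from the $V_7$-restriction. Your proposed workaround---checking that the $V_7$-restriction is incompatible with every other $A_1$ in Corollary~3's table---still presupposes that $X(q_0)$ lies in \emph{some} $\sigma$-stable $A_1$, which is exactly the content of Theorem~2 being proved. The paper avoids this circularity by passing through $GL_7$-conjugacy and then invoking \cite[1.5.11]{k} to descend to $G$. Replace your final step with that, and the case-by-case elimination in your last paragraph becomes unnecessary.
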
\begin{proof}
Pages 36 and 60 respectively of the Atlas substantiate the fact that we must have $X(q_0)=L_2(7)$ or $L_2(11)$ (rather than $L_2(7^2)$ or $L_2(11^2)$ for example). Examining the 7-dimensional Brauer characters in the Modular Atlas \cite{modat} of $L_2(7)\leq G_2(2)$ and $L_2(7)\leq PGL_2(7)$  one sees that they are irreducible and therefore conjugate in $GL_7(7)$. Similarly, the Brauer characters of $L_2(11)\leq J_1$ and $L_2(11)\leq PGL_2(11)$ in $G_2(11)$ are the same irreducible representation  and therefore conjugate in $GL_7(11)$. The result \cite[1.5.11]{k} then implies that they are conjugate in $G(q_0)$. Thus in each case, the subgroup $X(q_0)$ is in the $\sigma$-stable maximal $A_1$ of $G$.\end{proof}

\begin{lemma} Let $M$ have ID (v)-(viii). If $X(q_0)$ is a subgroup of $SL_3(q)$ or $SU_3(q)$ and is distinct from those already considered, then $q_0$ is odd and $X(q_0)$ is irreducible on the standard modules in each case. Moreover, each is contained in a  $\sigma$-stable subgroup of $\bar A_1\leq G$.\end{lemma}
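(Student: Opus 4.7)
The plan is to work inside the ambient closed connected subgroup of $G$ of type $A_2$ (the long $A_2$ for IDs (v), (vii) and the short $\tilde A_2$, present when $p=3$, for (vi), (viii)). This subgroup is $\sigma$-stable and has $\sigma$-fixed points equal to $SL_3(q)$ or $SU_3(q)$ according as $\sigma$ restricts to a standard or twisted Frobenius. Hence $X(q_0)\le A_2$, and I analyse the action of $X(q_0)\cong L_2(q_0)$ on the natural $3$-dimensional module $V_3$ for $A_2$: first, if $V_3\downarrow X(q_0)$ is reducible then $X(q_0)$ lies in a parabolic of $A_2$, hence in a parabolic of $G$, and is handled under cases (i)--(ii) of the preceding maximal-subgroup list. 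So I may assume irreducibility. Next I rule out $q_0$ even: by Steinberg's tensor product theorem, every absolutely irreducible $KX(q_0)$-module in characteristic $p=2$ has dimension a product of $1$'s and $2$'s, and hence a power of $2$, so no irreducible $3$-dimensional module exists. Therefore $q_0$ (and $p$) is odd.

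With $p$ odd, the irreducible $3$-dimensional module $V_3\downarrow X(q_0)$ is, up to a Frobenius twist, the symmetric square $\mathrm{Sym}^2 W$ of the natural $X(q_0)$-module $W$. Consequently the embedding $X(q_0)\hookrightarrow A_2$ factors through the algebraic map $\mathrm{Sym}^2:SL_2(K)\to SL_3(K)=A_2$ (possibly precomposed with a Frobenius on $SL_2$, which has the same image as $\mathrm{Sym}^2$ alone). Let $\bar X$ denote the image of this map; then $\bar X$ is the irreducible $A_1\le A_2$ appearing in the table of Corollary 3, and $X(q_0)\le \bar X$ by construction.

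It remains to verify that $\bar X$ can be chosen $\sigma$-stable. A standard copy of $\bar X$ in $A_2$ has defining equations with $\mathbb Z$-coefficients (coming from the polynomial entries of $\mathrm{Sym}^2(g)$ in those of $g$), so is preserved by any standard Frobenius on $A_2$; and $\mathrm{Sym}^2$ intertwines the graph automorphism $g\mapsto (g^{-1})^T$ on $SL_2$ with that on $SL_3$, so the same copy is stable under the graph automorphism of $A_2$. This yields a $\sigma$-stable irreducible $A_1\le A_2$ in each of cases (v)--(viii). The main obstacle to guard against is that the given $X(q_0)$ may be placed in an $A_2$-conjugate of this standard $\bar X$ rather than in $\bar X$ itself; I would resolve this via a Lang--Steinberg argument on $A_2/N_{A_2}(\bar X)$, noting that the set of irreducible $A_1$'s of $A_2$ containing $X(q_0)$ is a nonempty $\sigma$-stable subvariety and hence has an $\mathbb F_q$-point, giving a $\sigma$-stable $\bar X$ which contains $X(q_0)$ as required.
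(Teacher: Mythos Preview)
Your proof is correct and follows the same overall architecture as the paper's: reduce to the irreducible case (else $X(q_0)$ lies in a parabolic and was already handled), deduce $q_0$ odd, and then place $X(q_0)$ inside a $\sigma$-stable irreducible $A_1\le A_2$. The difference lies only in the last step. The paper simply invokes \cite[2.10.4(iii)]{KL} to assert that an irreducible $A_1(q_0)$ in $SL_3(q)$ or $SU_3(q)$ is unique up to $GL_3(q)$- (resp.\ $GU_3(q)$-) conjugacy, and concludes directly that it sits in a $\sigma$-stable $\bar A_1$. You instead construct the standard $\mathrm{Sym}^2$ copy explicitly, observe it is stable under both the Frobenius and the graph automorphism of $A_2$, and then run a Lang--Steinberg argument to move $X(q_0)$ into a $\sigma$-stable conjugate. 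Your route is more self-contained and makes the $\sigma$-stability transparent; the paper's is shorter but leans on the cited classification result. In fact you could shorten your final step further: since $X(q_0)$ acts irreducibly on $V_3$, Schur's lemma gives a unique (up to scalar) invariant symmetric form, so the $SO_3\cong A_1$ preserving that form is the \emph{unique} irreducible $A_1\le A_2$ containing $X(q_0)$, and is therefore automatically $\sigma$-stable without any Lang--Steinberg input.
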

\begin{proof} The action of $A_1(q_0)$ on the standard module $V$ for  $SL_3(q)$ or $SU_3(q)$ must be irreducible or else it is in a parabolic and already considered. It follows that $q_0$ is odd.

The fact that $A_1(q_0)$ is irreducible gives the restriction of the three-dimensional standard module as a high weight 2. Thus it is unique up to conjugacy in $GL_3(q)$ (or $GU_3(q)$) by \cite[2.10.4(iii)]{KL}. Hence it is contained in a $\sigma$-stable $\bar A_1\leq A_2$.\end{proof}

\begin{lemma} Let $M$ have ID (xvi). Then $X(q_0)=L_2(4)$ and $X(q_0)$ is contained in a subsystem subgroup and is thus already considered.\end{lemma}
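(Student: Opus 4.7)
The plan is to mirror the strategy used for the previous two lemmas (cases (xiv) and (xv)): first pin down the isomorphism type of $X(q_0)$ by an order argument combined with the known subgroup structure of $J_2$, then identify the embedding up to $G_\sigma$-conjugacy by comparing Brauer characters on $V_7$, and finally reduce to an earlier case of the proof of Theorem 2.

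First, I would establish that $X(q_0)=L_2(4)$. Recall $|J_2|=2^7\cdot 3^3\cdot 5^2\cdot 7$, and we need $X(q_0)$ to be quasi-simple of Lie type over $\mathbb F_{q_0}$ with $q_0=2^k\geq 4$. The candidate $L_2(16)$ is ruled out immediately since $17\mid|L_2(16)|$ but $17\nmid|J_2|$. For $L_2(8)$, $U_3(4)$, $L_3(4)$, $Sz(8)$, $Sp_4(4)$ and any larger rank Lie-type group in characteristic $2$, a quick check against the orders of the maximal subgroups of $J_2$ listed in \cite{atlas} shows none can embed (they either fail Lagrange at the level of $J_2$ or at the level of every maximal subgroup of $J_2$ of sufficient order). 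The only surviving candidate is $L_2(4)\cong A_5$, which does sit inside $J_2$ (e.g.\ as a composition factor of $2^{1+4}{:}A_5$ or as the maximal $A_5$).

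Next, to show that $X(q_0)=L_2(4)$ is contained in a subsystem subgroup of $G_\sigma$, I would run the same Brauer-character argument used in Lemma 5.2. Restrict $V_7$ from $G_\sigma$ to $A_5\leq J_2$; the resulting 7-dimensional Brauer character can be read off from the entry for $J_2$ in the Modular Atlas \cite{modat} (using that $J_2\leq G_2(4)$ arises via a specific 6- or 7-dimensional representation, and $V_7=V_6/0$ in characteristic $2$). On the other hand, consider $A_5\cong SL_2(4)$ sitting naturally inside $SL_2(4)\circ SL_2(4)=(A_1\tilde A_1)_\sigma$, which is the involution centralizer of case (iii) and hence a subsystem subgroup. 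Compute $V_7\downarrow A_5$ for this embedding using the decomposition $V_7\downarrow A_1\tilde A_1=1\otimes\tilde 1\oplus 0\otimes\tilde W(2)$ from Lemma 2.3. The two Brauer characters agree, and an application of \cite[1.5.11]{k} then yields that the two copies of $A_5$ are $G_\sigma$-conjugate. Hence $X(q_0)$ is contained in a subsystem subgroup, reducing it to case (iii).

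The main obstacle is ensuring that \emph{every} conjugacy class of $A_5$ inside $J_2$ is captured, since $J_2$ carries several classes of $A_5$ (as a direct factor in $A_4\times A_5$ and $A_5\times D_{10}$, as a complement in $2^{1+4}{:}A_5$, and as the maximal $A_5$). One must check that each gives the same 7-dimensional Brauer character on restriction from $G_\sigma$, so that all such $A_5$'s are $G_\sigma$-conjugate and land in the subsystem $(A_1\tilde A_1)_\sigma$. In practice this follows because the irreducible Brauer characters of $A_5$ in characteristic 2 have degrees $1,2,2,4$, and the constraint that the sum of a length-$\leq 7$ composition series equals $7$ and matches the $G_\sigma$-module structure of $V_7$ leaves essentially no ambiguity; the identification with the subsystem embedding then completes the proof.
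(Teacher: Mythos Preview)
Your identification of $X(q_0)=L_2(4)$ via order constraints and the Atlas is fine and is essentially what the paper does (in one sentence).

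For the second step, the paper takes a different route: it simply invokes a Magma computation in $G_2(4)$ to verify that every copy of $A_5$ inside $J_2$ lies in a subsystem subgroup. Your Brauer-character strategy, modelled on Lemma~5.2, has a genuine gap. In Lemma~5.2 the key point is that the $7$-dimensional restrictions are \emph{irreducible}; irreducibility is what gives conjugacy in $GL_7$, and only then does \cite[1.5.11]{k} upgrade this to conjugacy in $G_\sigma$. Here the restriction $V_7\downarrow A_5$ is certainly reducible (the irreducible $2$-modular representations of $A_5$ have dimensions $1,2,2,4$), so agreement of Brauer characters tells you only that the composition factors coincide, not that the modules are isomorphic, and hence not even that the two copies of $A_5$ are $GL_7$-conjugate. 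Your final paragraph acknowledges the multiplicity of $A_5$-classes in $J_2$ but then hand-waves the issue away; the claim that the constraints ``leave essentially no ambiguity'' is exactly the point requiring proof. Indeed, the whole thrust of Theorem~1 and Corollary~3 in this paper is that in characteristic~$2$ there are several $G$-classes of $A_1$ with closely related $V_7$-restrictions, so one cannot expect composition factors alone to pin down the conjugacy class.

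To repair your approach without Magma you would need to determine the full module structure of $V_7\downarrow A_5$ for each $J_2$-class of $A_5$ (for instance via the decomposition of $V_6\downarrow J_2$ and explicit branching), check that each is isomorphic to the restriction arising from some embedding into a subsystem subgroup, and then argue conjugacy in $G_\sigma$---which still requires more than \cite[1.5.11]{k} as stated. This is substantially more work than the one-line computer check the paper uses.
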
\begin{proof} Checking the maximal subgroups of $J_2$ in the Atlas, one establishes that $A_1(q_0)=L_2(4)\cong A_5$. A simple Magma \cite{ma} calculation in $G_2(4)$ shows all of these lie within subsystem subgroups.\end{proof}

Observe finally that if $X(q_0)\leq M$ for $M$ with ID (ix) or (x) then $X(q_0)$ is in $G_2(q_0)$ or $^2 G_2(q_0)$. It is thus in one of its maximal subgroups and has already been considered, completing the proof of Theorem 2 and this paper.

{\bf Acknowledgements.} This paper was prepared towards the author's PhD qualification under the supervision of Prof. M. W. Liebeck, with financial support from the EPSRC. We would like to thank Prof Liebeck for setting the problem and for his immeasurable assistance in its production. We would also like to acknowledge the anonymous referee for his or her helpful comments and corrections to this paper.

David Stewart, Department of Mathematics, Imperial College, London, SW7 2AZ, UK. \\ Email: dis20@cantab.net

\end{document}